\newtheorem{thm}{Theorem}[section]
\newtheorem{cor}[thm]{Corollary}
\newtheorem{lem}[thm]{Lemma}
\newtheorem{prop-def}[thm]{Proposition-Definition}
\theoremstyle{definition}
\newtheorem{defi}[thm]{Definition}
\theoremstyle{remark}
\newtheorem{exm}[thm]{\bf Example}
\numberwithin{equation}{section}
\numberwithin{figure}{section}
\newcommand{\A}{\mathcal{A}}
\def\la{\lambda}
\def\A{\mathcal{A}}
\def \S{\mathcal{S}}
\def\PS{\mathbb{S}}
\def\HPS{{\rm H}\mathbb{S}}
\def\diag{{\rm diag}}
\def\x{{\mathbf x}}
\def\y{{\mathbf y}}
\def\C{\mathbb{C}}
\def\Z{\mathbb{Z}}
\def\V{\mathcal{V}}
\def \PV{\mathbb{V}}
\def\HPV{{\rm H}\mathbb{V}}
\def\I{\mathcal{I}}
\def\B{\mathcal{B}}
\def\diag{{\rm diag}}
\def \i{\mathbf{i}}
\def \A{\mathcal{A}}
\def \L{\mathcal{L}}
\def \Q{\mathcal{Q}}
\def \D{\mathcal{D}}
\def \Spec{\mbox{\rm Spec}}
\def \HSpec{\mbox{\rm HSpec}}
\def\cl{\mbox{\rm cl}}
\begin{document}
\title[Eigenvector of Laplacian or signless Laplacian of Hypergraphs]
{Eigenvectors of Laplacian or signless Laplacian of Hypergraphs Associated with Zero Eigenvalue}

\author[Y.-Z. Fan]{Yi-Zheng Fan$^*$}
\address{School of Mathematical Sciences, Anhui University, Hefei 230601, P. R. China}
\email{fanyz@ahu.edu.cn}
\thanks{$^*$The corresponding author.
This work was supported by National Natural Science Foundation of China (Grant No. 11871073, 11771016, 11871071).}

\author[Y. Wang]{Yi Wang}
\address{School of Mathematical Sciences, Anhui University, Hefei 230601, P. R. China}
\email{wangy@ahu.edu.cn}

\author[Y.-H. Bao]{Yan-Hong Bao}
\address{School of Mathematical Sciences, Anhui University, Hefei 230601, P. R. China}
\email{baoyh@ahu.edu.cn}

\author[J.-C. Wan]{Jiang-Chao Wan}
\address{School of Mathematical Sciences, Anhui University, Hefei 230601, P. R. China}
\email{1500256209@qq.com}

\author[M. Li]{Min Li}
\address{School of Mathematical Sciences, Anhui University, Hefei 230601, P. R. China}
\email{1736808193@qq.com}

\author[Z. Zhu]{Zhu Zhu}
\address{School of Mathematical Sciences, Anhui University, Hefei 230601, P. R. China}
\email{2937242741@qq.com}


\subjclass[2000]{Primary 15A18, 05C65; Secondary 13P15, 14M99}



\keywords{Hypergraph, tensor, eigenvector, Laplacian, signless Laplacian, zero eigenvalue}

\begin{abstract}
Let $G$ be a connected $m$-uniform hypergraph.
In this paper we mainly consider the eigenvectors of the Laplacian or signless Laplacian tensor of $G$ associated with zero eigenvalue,
called the first Laplacian or signless Laplacian eigenvectors of $G$.
By means of the incidence matrix of $G$, the number of first Laplacian or signless Laplacian (or H-)eigenvectors can be obtained explicitly by solving
the Smith normal form of the incidence matrix over $\Z_m$ (or $\Z_2)$.
Consequently, we prove that the number of first Laplacian (H-)eigenvectors is equal to the number of
first signless Laplacian (H-)eigenvectors when zero is an (H-)eigenvalue of the signless Laplacian tensor.
We establish a connection between first Laplacian (signless Laplacian) H-eigenvectors and the even (odd) bipartitions of $G$.
\end{abstract}

\maketitle

\section{Introduction}
A {\it hypergraph} $G=(V(G),E(G))$ consists of a set of vertices, say $V(G)=\{v_1,v_2,\ldots,v_n\}$ and a set of edges, say $E(G)=\{e_{1},e_2,\ldots,e_{k}\} \subseteq 2^{V(G)}$.
If $|e_{j}|=m$ for each $j \in [k]:=\{1,2,\ldots,k\}$, then $G$ is called an {\it $m$-uniform} hypergraph.
In particular, the $2$-uniform hypergraphs are exactly the classical simple graphs.
One can refer \cite{Ber} for more on hypergraphs.

Recently, spectral hypergraph theory is proposed to explore connections between
the structure of a uniform hypergraph and the eigenvalues or eigenvectors of some related tensors.
Here the tensors may be called hypermatrices, which are multi-dimensional arrays of entries in some field,
  and can be viewed to be the coordinates of the classical tensors (as multilinear functions) under a certain (orthonormal) basis.
To be precise,
  a real {\it tensor} (or {\it hypermatrix}) $\A=(a_{i_{1}\ldots i_{m}})$ of order $m$ and dimension $n$ refers to a
  multi-dimensional array of entries $a_{i_{1}\ldots i_{m}} \in \mathbb{R}$ for all $i_{j}\in [n]$ and $j\in [m]$.
  The tensor $\A$ is called \textit{symmetric} if its entries are invariant under any permutation of their indices.

The eigenvalues of a tensor were introduced by Qi \cite{Qi, Qi2} and Lim \cite{Lim} independently.
To find the eigenvalues of a tensor, Qi \cite{Qi, Qi2} introduced the characteristic polynomial of a tensor,
 which is defined to be a resultant of a system of homogeneous polynomials.

 Given a vector $\x=(x_1,x_2,\ldots,x_n) \in \mathbb{C}^{n}$, $\A\x^{m-1} \in \C^n$, which is defined as follows:
  $$
  (\A\x^{m-1})_i =\sum_{i_{2},\ldots,i_{m}\in [n]}a_{ii_{2}\ldots i_{m}}x_{i_{2}}\cdots x_{i_m}, \mbox{~for~} i \in [n].
  $$
 Let $\mathcal{I}$ be the {\it identity tensor} of order $m$ and dimension $n$, that is, $i_{i_{1}i_2 \ldots i_{m}}=1$ if
   $i_{1}=i_2=\cdots=i_{m} \in [n]$ and $i_{i_{1}i_2 \ldots i_{m}}=0$ otherwise.

\begin{defi}{\em \cite{Lim,Qi}} Let $\A$ be an $m$-th order $n$-dimensional real tensor.
For some $\lambda \in \mathbb{C}$, if the polynomial system $(\lambda \mathcal{I}-\A)\x^{m-1}=0$, or equivalently $\A\x^{m-1}=\lambda \x^{[m-1]}$, has a solution $\x\in \mathbb{C}^{n}\backslash \{0\}$,
then $\lambda $ is called an \emph{eigenvalue} of $\A$ and $\x$ is an \emph{eigenvector} of $\A$ associated with $\lambda$,
where $\x^{[m-1]}:=(x_1^{m-1}, x_2^{m-1},\ldots,x_n^{m-1})$.
\end{defi}

If $\x$ is a real eigenvector or can be scaled into a real eigenvector of $\A$, surely the corresponding eigenvalue $\lambda$ is real.
In this case, $\lambda$ is called an {\it H-eigenvalue} of $\A$ and $\x$ is called an \emph{H-eigenvector}.
If $\x$ cannot be scaled into real, then $\x$ is called an \emph{N-eigenvector}.
If any eigenvector associated with $\la$ is an N-eigenvector, then $\la$ is called an {\it N-eigenvalue}.

The {\it characteristic polynomial} $\varphi_\A(\la)$ of $\A$ is defined to be the resultant of the system of polynomials $(\la \I-\A)\x^{m-1}$ in the indeterminant $\la$ \cite{CPZ2,Qi2}.
It is known that $\la$ is an eigenvalue of $\A$ if and only if it is a root of $\varphi_\A(\la)$.
The {\it spectrum} of $\A$, denoted by $\Spec(\A)$, is the multi-set of the roots of $\varphi_\A(\la)$, including multiplicity.
The largest modulus of the elements in $\Spec(\A)$ is called the \emph{spectral radius} of $\A$, denoted by $\rho(\A)$.
Denote by $\HSpec(\A)$ the set of distinct H-eigenvalues of $\A$.

Under the definitions of tensors and their eigenvalues, three main kinds of tensors associated with a uniform hypergraph were proposed, and their eigenvalues were
applied to investigate the structure of the hypergraph.
Cooper and Dutle \cite{CD} introduced the adjacency tensor of a uniform hypergraph, and
Qi \cite{Qi3} introduced the Laplacian tensor and signless Laplacian tensor.

\begin{defi}\cite{CD,Qi3}
Let $G=(V, E)$ be an $m$-uniform hypergraph on $n$ vertices.
The \emph{adjacency tensor} $\A(G)=(a_{i_1\cdots i_m})$ of $G$ is defined to be an $m$-th order and $n$-dimensional tensor whose entries are given by
\begin{align*}
a_{i_1\cdots i_m}=\begin{cases}
\dfrac{1}{(m-1)!}, & \mbox{if~} \{i_1,\cdots, i_m\}\in E, \\
0, & {\rm otherwise}.
\end{cases}
\end{align*}
Let $\D$ be an $m$-th order and $n$-dimensional diagonal tensor whose diagonal entries $d_{i\ldots i}$ is exactly the degree of the vertex $i$ for $i \in [n]$.
The \emph{Laplacian tensor} of $G$ is defined as $\L(G)=\D(G)-\A(G)$ and the \emph{signless Laplacian tensor} of $G$ is defined as $\Q(G)=\D(G)+\A(G)$.
\end{defi}

Surely, $\L(G)$ always has a zero eigenvalue with the all-ones vector $\mathbf{1}$ as an eigenvector.
When $G$ is connected, $\Q(G)$ has a zero eigenvalue if and only if $G$ is odd-colorable.
 For convenience, the eigenvectors of $\L(G)$ (respectively, $\Q(G)$) corresponding to the zero eigenvalue are called the \emph{first Laplacian eigenvectors}
(respectively, \emph{first signless Laplacian eigenvectors}) of $G$.
When such eigenvectors are an H (or N)-eigenvector, we will call them first Laplacian and first signless Laplacian H (or N)-eigenvectors respectively.

There are many results on the above tensors associated with hypergraph, see e.g. \cite{CD, CQ, FKT, HQ, HQS, HQX, KLQY, KF, KF2, LM, Ni,PZ, Qi3,SSW,ZSWB}.
Shao et al. \cite{SSW} used the equality of H-spectra of the Laplacian and signless Laplacian tensor to characterize the odd-bipartiteness of a hypergraph.
Nikiforov \cite{Ni} applied the symmetric spectrum of the adjacency tensor to characterize the odd-colorable property of a hypergraph.
Fan et al. \cite{FHB} characterized the spectral symmetry of the adjacency tensor of a hypergraph by means of the generalized traces which are related to the structure of the hypergraph.

In the paper \cite{FBH} Fan et al. proved that up to a scalar, there are a finite number of eigenvectors of the adjacency tensor associated with the spectral radius,
and such number can be obtained explicitly by the Smith normal form of incidence matrix of the hypergraph.
Hu and Qi \cite{HQ} established a connection between the number of first Laplacian (signless Laplacian) H-eigenvectors
 and the number of even (odd)-bipartite components of a uniform hypergraph.
They also discuss the number of first Laplacian (signless Laplacian) N-eigenvectors for $3$, $4$ or $5$-uniform hypergraphs by means of some kinds of partitions of the vertex set.
However, determining the number of even (odd)-bipartite components or the partitions as described in the paper is not so easy.

In this paper we will determine the number of the first Laplacian or signless Laplacian eigenvectors of a connected $m$-uniform hypergraph.
By means of the incidence matrix of a hypergraph, the number of first Laplacian or signless Laplacian (or H-)eigenvectors can be obtained explicitly by solving
the Smith normal form of the incidence matrix over $\Z_m$ (or $\Z_2$).
Consequently, we prove that the number of first Laplacian (H-)eigenvectors is equal to the number of
first signless Laplacian (H-)eigenvectors when zero is an (H-)eigenvalue of the signless Laplacian tensor.
We establish a connection between first Laplacian (signless Laplacian) H-eigenvectors and the even (odd) bipartitions of a hypergraph.

\section{Preliminaries}
Let $\A$ be an $m$-th order $n$-dimensional real tensor.
$\A$ is called \emph{reducible} if there exists a nonempty proper subset $I \subset [n]$ such that
$a_{i_{1}i_2\ldots i_{m}}=0$ for any $i_1 \in I$ and any $i_2,\ldots,i_m \notin I$;
if $\A$ is not reducible, then it is called \emph{irreducible} \cite{CPZ}.
We also can associate $\A$ with a directed graph $D(\A)$ on vertex set $[n]$ such that $(i,j)$ is an arc of $D(\A)$ if
and only if there exists a nonzero entry $a_{ii_2\ldots i_{m}}$ such that $j \in \{ i_2\ldots i_{m}\}$.
Then $\A$ is called \emph{weakly irreducible} if $D(\A)$ is strongly connected; otherwise it is called \emph{weakly reducible} \cite{FGH}.
It is known that if $\A$ is irreducible, then it is weakly irreducible; but the converse is not true.
The adjacency tensor, Laplacian or signless Laplacian tensor of a hypergraph is weakly irreducible if and only if the hypergraph is connected \cite{PZ, YY3}.

\subsection{Nonnegative tensors}
The Perron-Frobenius theorem was generalized from nonnegative matrices to nonnegative tensors by Chang et al. \cite{CPZ},
Yang and Yang \cite{YY1,YY2,YY3}, and Friedland et al. \cite{FGH}.
Here we list a part of the theorem.

According to the tensor product introduced by Shao \cite{Shao}, for a tensor $\A$ of order $m$ and dimension $n$, and two diagonal matrices $P,Q$ both of dimension $n$,
the product $P\A Q$ has the same order and dimension as $\A$, whose entries are given by
\begin{equation}\label{product}
(P\A Q)_{i_1i_2\ldots i_m}=p_{i_1i_1}a_{i_1i_2\ldots i_m}q_{i_2i_2}\ldots q_{i_mi_m}.
\end{equation}
If $P=Q^{-1}$, then $\A$ and $P^{m-1}\A Q$ are called \emph{diagonal similar}.
It is proved that two diagonal similar tensors have the same spectrum \cite{Shao}.

\begin{thm}\cite{YY3}\label{PF2}
Let $\A$ and $\B$ be $m$-th order $n$-dimensional tensors with $|\B|\le \A$. Then
\begin{enumerate}

\item[(1)] $\rho(\B)\le \rho(\A)$.

\item[(2)] If $\A$ is weakly irreducible and $\rho(\B)=\rho(\A)$, where $\la=\rho(\A)e^{\i\theta}$ is
an eigenvalue of $\B$ corresponding to an eigenvector $\y$, then $\y=(y_1, \cdots, y_n)$ contains no zero entries, and $\A=e^{-\i\theta}D^{-(m-1)}\B D$,
where $D=\diag(\frac{y_1}{|y_1|}, \cdots, \frac{y_n}{|y_n|})$.
\end{enumerate}
\end{thm}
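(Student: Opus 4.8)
The plan is to deduce both parts from the Perron--Frobenius theory for nonnegative tensors, using that $|\B|\le\A$ already forces $\A\ge 0$ entrywise. The two external inputs I would invoke are the \emph{weak Collatz--Wielandt inequality} (for a nonnegative tensor $\A$, any $\z\ge 0$ with $\z\neq 0$ and $\A\z^{m-1}\ge\mu\z^{[m-1]}$ satisfies $\rho(\A)\ge\mu$) and the fact that for a \emph{weakly irreducible} $\A$ the value $\rho(\A)$ is attained only by a strictly positive eigenvector, with equality in the defining inequality.

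First I would prove (1). Pick an eigenvalue $\la$ of $\B$ with $|\la|=\rho(\B)$ and a corresponding eigenvector $\y$, and set $\z=(|y_1|,\ldots,|y_n|)\ge 0$. Taking moduli in the $i$-th coordinate of $\B\y^{m-1}=\la\y^{[m-1]}$ and using the triangle inequality together with $|b_{ii_2\cdots i_m}|\le a_{ii_2\cdots i_m}$ yields
\begin{align*}
\rho(\B)\,|y_i|^{m-1}
&=\Bigl|\sum_{i_2,\ldots,i_m} b_{ii_2\cdots i_m}\,y_{i_2}\cdots y_{i_m}\Bigr| \\
&\le \sum_{i_2,\ldots,i_m} a_{ii_2\cdots i_m}\,|y_{i_2}|\cdots|y_{i_m}|
=(\A\z^{m-1})_i .
\end{align*}
Hence $\A\z^{m-1}\ge\rho(\B)\z^{[m-1]}$, and the weak Collatz--Wielandt inequality gives $\rho(\A)\ge\rho(\B)$.

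For (2), assume in addition that $\A$ is weakly irreducible and $\rho(\A)=\rho(\B)=:\rho$. Then $\z$ is a nonnegative nonzero vector with $\A\z^{m-1}\ge\rho\z^{[m-1]}$; by weak irreducibility this forces $\z$ to be a positive Perron eigenvector and $\A\z^{m-1}=\rho\z^{[m-1]}$. In particular every $|y_i|>0$, so $\y$ has no zero entries, and both inequalities above are equalities for each $i$. Since the weights $|y_{i_2}|\cdots|y_{i_m}|$ are now strictly positive, equality in the comparison $|b|\le a$ forces $|b_{ii_2\cdots i_m}|=a_{ii_2\cdots i_m}$ for every index tuple; and equality in the triangle inequality forces every nonzero summand $b_{ii_2\cdots i_m}y_{i_2}\cdots y_{i_m}$ to have the same argument as $\la y_i^{m-1}$. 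Writing $y_j=|y_j|e^{\i\phi_j}$ and $\la=\rho e^{\i\theta}$, the latter condition reads
\[
b_{ii_2\cdots i_m}=e^{\i\theta}\,e^{\i(m-1)\phi_i}\,e^{-\i(\phi_{i_2}+\cdots+\phi_{i_m})}\,a_{ii_2\cdots i_m},
\]
which, with $D=\diag(e^{\i\phi_1},\ldots,e^{\i\phi_n})$, rearranges by the product rule \eqref{product} to exactly $\A=e^{-\i\theta}D^{-(m-1)}\B D$.

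The hard part will be the Perron--Frobenius input invoked in (2): that for a weakly irreducible nonnegative tensor a test vector realizing $\A\z^{m-1}\ge\rho\z^{[m-1]}$ must be strictly positive and must give equality. This positivity is precisely what promotes the componentwise inequalities into the modulus and phase identities that assemble the diagonal similarity; without weak irreducibility $\z$ could have zero entries and the rigidity would fail. Everything else is bookkeeping with the product rule \eqref{product} and the equality cases of the triangle inequality (for index tuples with $a_{ii_2\cdots i_m}=0$ one has $b_{ii_2\cdots i_m}=0$, so the displayed identity holds trivially).
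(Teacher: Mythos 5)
Your proposal is correct, but note that the paper offers no proof of this statement at all: it is imported verbatim from Yang--Yang [YY3], and your argument is essentially a reconstruction of the proof given in that source. Taking moduli of $\B\y^{m-1}=\la\y^{[m-1]}$ to produce a nonnegative comparison vector, invoking the weak Collatz--Wielandt inequality for part (1) and the weakly-irreducible rigidity lemma (positivity of the comparison vector and forced equality) for part (2), and then reading off the diagonal similarity $\A=e^{-\i\theta}D^{-(m-1)}\B D$ from the equality cases of $|b_{ii_2\cdots i_m}|\le a_{ii_2\cdots i_m}$ and of the triangle inequality is exactly the standard route; your bookkeeping, including the trivial tuples with $a_{ii_2\cdots i_m}=0$, is complete, and the two black-box inputs you flag are genuine theorems of the tensor Perron--Frobenius theory developed in the references [CPZ], [YY1]--[YY3], [FGH] that the paper itself cites.
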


\subsection{Affine and projective eigenvariety}
 Let $\la$ be an eigenvalue of a tensor $\A$ of order $m$ and dimension $n$.
 Let $\V_\la=\V_\la(\A)$ the set of all eigenvectors of $\A$ associated with $\la$ together with zero, i.e.
$$\V_\la(\A)=\{\x \in \mathbb{C}^n: \A\x^{m-1}=\la \x^{[m-1]}\}.$$
Observe that the system of equations $(\la \I-\A)\x^{m-1}=0$ is not linear yet for $m\ge 3$,
and therefore $\V_\la$ is not a linear subspace of $\mathbb{C}^n$ in general.
In fact, $\V_\la$ forms an affine variety in $\C^n$ \cite{Ha}, which is called the \emph{eigenvariety} of $\A$ associated with $\la$ \cite{HuYe}.
Let $\mathbb{P}^{n-1}$ be the standard complex projective spaces of dimension $n-1$.
Since each polynomial in the system $(\la \I-\A)\x^{m-1}=0$ is homogenous of degree $m-1$,
we consider the projective variety
$$\PV_\la=\PV_\la(\A)=\{\x \in \mathbb{P}^{n-1}: \A\x^{m-1}=\la \x^{[m-1]}\},$$
which is called the \emph{projective eigenvariety} of $\A$ associated with $\la$ \cite{FBH}.
In this paper the number of eigenvectors of $\A$ is considered in the projective eigenvariety $\PV_\la(\A)$, i.e.
the eigenvectors differing by a scalar is counted once as the same eigenvector.

\subsection{Incidence matrices}
Let $\A$ be a symmetric tensor of order $m$ and dimension $n$.
Set $$E(\A)=\{(i_1, i_2, \cdots, i_m)\in [n]^m: a_{i_1i_2\cdots i_m}\neq 0, 1\le i_1\le \cdots \le i_m \le n\}.$$
Define
\[b_{e,j}=|\{k: i_k=j, e=(i_1, i_2, \cdots, i_m) \in E(\A), k \in [m]\}|\]
and obtain an $|E(\A)|\times n$ matrix $B_\A=(b_{e,j})$, which is called the \emph{incidence matrix} of $\A$ \cite{FBH}.
Observe that $b_{ij}\in \{0, 1, \cdots, m-1, m\}$ so that the incidence matrix $B_\A$
can be viewed as one over $\Z_m$, where $\Z_m$ is the ring of integers modulo $m$.

The \emph{incidence matrix} of an $m$-uniform hypergraph $G$, denoted by $B_G=(b_{e,v})$, coincides with that of $\A(G)$, that is, for $e \in E(G)$ and $v \in V(G)$,
\begin{align*}
b_{e,v}=\begin{cases}
1, & \mbox{if~} \ v\in e,\\
0, & \textrm{otherwise}.
\end{cases}
\end{align*}

\subsection{$\Z_m$-modules}
Let $R$ be a ring with identity and $M$ a nonzero module over $R$. Recall that a finite chain of $l+1$ submodules of $M$
\[M=M_0>M_1>\cdots>M_{l-1}>M_l=0\]
is called a \emph{composition series} of length $l$ for $M$ provided that $M_{i-1}/M_i$ is simple for each $i \in [l]$.
By the Jordan-H\"older Theorem, if a module $M$ has a finite composition series,
then every pair of composition series for $M$ are equivalent.
The length of the composition series for such a module $M$ is called the \emph{composition length} of $M$,
denoted by $\cl(M)$; see \cite[Section 11]{AF} for more details.

We only focus on the modules over $\Z_m$. Let $M$ be a finitely generated module over $\Z_m$.
Suppose that $M$ is isomorphic to
\[\Z_{p_1^{k_1}}\oplus \Z_{p_2^{k_2}}\oplus \cdots \oplus \Z_{p_s^{k_s}},\]
where $p_i$ is a prime and $p_i^{k_i}| m$ for $i \in [s]$.
In this situation, $\cl(M)=\sum_{i=1}^s k_i$.
In particular, if $m$ is prime, then $\Z_m$ is a field, and $M$ is a linear space over $\Z_m$ whose dimension is exactly the composition length of $M$.
If $m=p_1^{k_1}p_2^{k_2}\cdots p_s^{k_s}$ with $p_i$'s being prime and distinct,
then as a regular module over $\Z_m$, $\cl(\Z_m)=\sum_{i=1}^s k_i=:\cl(m)$.
Also, if $d|m$, then $\Z_d$ is a submodule of the above regular module, and $\cl(\Z_d)=:\cl(d)$.

\subsection{Smith normal form of matrices over $\Z_m$}
Similar to the Smith normal form for a matrix over a principle ideal domain, one can also deduce a diagonal form for a matrix over $\Z_m$ by some elementary row transformations and column transformations, that is, for any matrix $B \in \Z_m^{k \times n}$,
there exist two invertible matrices $P \in \Z_m^{k \times k}$ and $Q \in \Z_m^{n \times n}$ such that
\begin{equation} \label{smith}
PBQ=\begin{pmatrix}
d_1 & 0 & 0 &  & \cdots & & 0\\
0 & d_2 & 0 &  & \cdots & &0\\
0 & 0 & \ddots &  &  & & 0\\
\vdots &  &  & d_r &  & & \vdots\\
 & & & & 0 & & \\
  & & & &  & \ddots & \\
0 &  &  & \cdots &  & &0
\end{pmatrix},
\end{equation}
where $r \ge 0$, $ 1 \le d_i \le m-1$, $d_i | d_{i+1}$ for $i=1,\ldots, r-1$, and $d_i |m$ for all $i=1,2,\ldots,r$.
The matrix in (\ref{smith}) is called the {\it Smith normal form} of $B$ over $\Z_m$,
$d_i$'s are the \emph{invariant divisors} of $B$.

Now consider a linear equation over $\Z_m$: $ B\x=0$, where $B \in \Z_m^{k \times n}$ and $\x \in \Z_m^n$.
The solution set
\[ \S=\{\x \in \Z_m^n:  B\x=0\}\]
is a $\Z_m$-module.
Suppose $B$ has a Smith normal form over $\Z_m$ as in (\ref{smith}).
Let $\S'=\{\x \in \Z_m^n: P B Q \x=0\}$.
Then $\x \in \S$ if and only if $Q^{-1} \x \in \S'$, implying that $\S$ is $\Z_m$-module isomorphic to $\S'$.
It is easy to see
$$\S \cong \S' \cong \oplus_{i, d_i \ne 1} \Z_{d_i} \oplus \underbrace{\Z_m \oplus \cdots \oplus \Z_m}_{n-r \text{\small~copies~}},$$
and the composition length of $\S$ is
$\sum_{i, d_i \ne 1} \cl(d_i)+ (n-r)\cl(m)$.

\section{First Laplacian or signless Laplacian eigenvectors}
We first introduce a property of the first Laplacian or signless Laplacian tensor of a connected uniform hypergraph given by Hu and Qi \cite{HQ}.

\begin{lem}\cite[Theorem 4.1]{HQ}\label{L&sL}
Let $G$ be an $m$-uniform connected hypergraph on $n$ vertices.

{\em (i)} $\x$ is a first Laplacian eigenvector of $G$ if and only if there exist a nonzero $\gamma \in \C$ and integers $\alpha_i$ such that
$x_i=\gamma e^{\i \frac{2\pi}{m} \alpha_i}$ for $i \in [n]$, and for each edge $e \in E(G)$,
\begin{equation}\label{eqL}\sum_{j \in e} \alpha_j=\sigma_e m,\end{equation}
for some integer $\sigma_e$ associated with $e$.

{\em (ii)} $\x$ is a first signless Laplacian eigenvector of $G$ if and only if there exist a nonzero $\gamma \in \C$ and integers $\alpha_i$ such that
$x_i=\gamma e^{\i \frac{2\pi}{m} \alpha_i}$ for $i \in [n]$, and for each edge $e \in E(G)$,
\begin{equation}\label{eqQ}\sum_{j \in e} \alpha_j=\sigma_e m+\frac{m}{2},\end{equation}
for some integer $\sigma_e$ associated with $e$.
\end{lem}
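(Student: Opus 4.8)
The plan is to read off the equations $\L(G)\x^{m-1}=0$ and $\Q(G)\x^{m-1}=0$ coordinatewise and then exploit the rigidity coming from the weak irreducibility of a connected $G$. First I would compute, for the adjacency tensor, that $(\A(G)\x^{m-1})_i=\sum_{e\ni i}\prod_{j\in e\setminus\{i\}}x_j$; hence $\L(G)\x^{m-1}=0$ is equivalent to the system $d_ix_i^{m-1}=\sum_{e\ni i}\prod_{j\in e\setminus\{i\}}x_j$ for all $i\in[n]$, while $\Q(G)\x^{m-1}=0$ is the same system with a minus sign on the right-hand side. I will run the argument for case (i); case (ii) is identical up to that sign.

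The first substantial step is to show that a nonzero solution has constant modulus, so that $x_i=\gamma w_i$ with $|w_i|=1$ for a common $\gamma\neq0$. I would obtain this either by a direct maximum-modulus argument — if $M=\max_i|x_i|$ is attained at $i_0$, then $d_{i_0}M^{m-1}=|\sum_{e\ni i_0}\prod_{j\in e\setminus\{i_0\}}x_j|\le d_{i_0}M^{m-1}$ forces $|x_j|=M$ for every vertex $j$ sharing an edge with $i_0$, and connectivity propagates this to all of $V(G)$ — or, more in the spirit of the paper, by recognizing the normalized tensor $\D(G)^{-1}\A(G)$ as a nonnegative weakly irreducible tensor whose Perron eigenvalue is $1$ (witnessed by $\mathbf 1$) and invoking Theorem~\ref{PF2}. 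Equality in the triangle inequality above is the crux: it pins down the phases, forcing $\prod_{j\in e\setminus\{i\}}w_j=w_i^{m-1}$, equivalently $\prod_{j\in e}w_j=w_i^{m}$, for every edge $e$ and every $i\in e$ (in case (ii) the right-hand sides acquire a minus sign). I expect this modulus-and-phase rigidity to be the main obstacle; everything afterward is bookkeeping with roots of unity.

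From $\prod_{j\in e}w_j=w_i^m$ for all $i\in e$ I get $w_i^m=w_{i'}^m$ whenever $i,i'$ lie in a common edge, so that $w_i/w_{i'}$ is an $m$-th root of unity; walking along paths and using connectivity, all the $w_i$ lie in a single coset of the group of $m$-th roots of unity. Fixing a base vertex and absorbing its phase into $\gamma$, I may write $x_i=\gamma e^{\i\frac{2\pi}{m}\alpha_i}$ with $\alpha_i\in\Z$. Substituting into $\prod_{j\in e}w_j=w_i^m$ then reduces, via $e^{\i 2\pi\alpha_i}=1$, to $e^{\i\frac{2\pi}{m}\sum_{j\in e}\alpha_j}=1$, i.e. $\sum_{j\in e}\alpha_j\equiv0\pmod m$, which is exactly \eqref{eqL}; in case (ii) the minus sign yields $e^{\i\frac{2\pi}{m}\sum_{j\in e}\alpha_j}=-1$, i.e. $\sum_{j\in e}\alpha_j\equiv m/2\pmod m$, which is \eqref{eqQ} (and forces $m$ to be even, consistent with $\Q(G)$ having a zero eigenvalue only when $G$ is odd-colorable).

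Finally I would verify the converse by direct substitution: assuming $x_i=\gamma e^{\i\frac{2\pi}{m}\alpha_i}$ and the edge congruence \eqref{eqL} (resp. \eqref{eqQ}), a short computation gives $(\A(G)\x^{m-1})_i=d_i\gamma^{m-1}e^{-\i\frac{2\pi}{m}\alpha_i}$ (resp. its negative), while $(\D(G)\x^{m-1})_i=d_ix_i^{m-1}=d_i\gamma^{m-1}e^{-\i\frac{2\pi}{m}\alpha_i}$ since $e^{\i 2\pi\alpha_i}=1$; comparing the two shows $\L(G)\x^{m-1}=0$ (resp. $\Q(G)\x^{m-1}=0$). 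This closes both equivalences.
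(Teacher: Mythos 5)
Your proof is correct, but note that the paper itself offers no proof of this statement: Lemma \ref{L&sL} is imported verbatim from Hu and Qi \cite[Theorem 4.1]{HQ}, so there is nothing internal to compare against. What you have written is a sound, self-contained reconstruction, and it follows the same general strategy as the cited source: write the equation $\L(G)\x^{m-1}=0$ (resp.\ $\Q(G)\x^{m-1}=0$) coordinatewise as $d_i x_i^{m-1}=\pm\sum_{e\ni i}\prod_{j\in e\setminus\{i\}}x_j$, force constant modulus, then extract the phase congruences. Your maximum-modulus argument is complete: equality in $d_{i_0}M^{m-1}=\bigl|\sum_{e\ni i_0}\prod_{j\in e\setminus\{i_0\}}x_j\bigr|\le d_{i_0}M^{m-1}$ does force every vertex sharing an edge with $i_0$ to attain modulus $M$, connectivity propagates this to all of $V(G)$, and then equality in the triangle inequality at every vertex pins each summand to $x_i^{m-1}$ (resp.\ $-x_i^{m-1}$), giving $\prod_{j\in e}w_j=w_i^m$ (resp.\ $-w_i^m$). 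The passage to a single coset of $m$-th roots of unity via connectivity, the reduction to $\sum_{j\in e}\alpha_j\equiv 0$ (resp.\ $m/2$) $\pmod m$, and the converse by direct substitution are all correct.

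One caveat worth flagging: the alternative route you sketch — viewing $\D(G)^{-1}\A(G)$ as a nonnegative weakly irreducible tensor with Perron eigenvalue $1$ and invoking Theorem \ref{PF2}(2) — is not quite a drop-in replacement as stated. Theorem \ref{PF2}(2) yields that the eigenvector has no zero entries and gives the diagonal-similarity (hence the phase conditions), but it does not by itself give constancy of the moduli $|x_i|$; for that you would additionally need uniqueness of the positive Perron vector ($\mathbf{1}$, up to scaling) for weakly irreducible nonnegative tensors. Since your primary, elementary argument already establishes constant modulus directly, this does not affect the validity of the proof, but if you retain the alternative sketch you should cite the uniqueness statement explicitly rather than Theorem \ref{PF2} alone.
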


Let $G$ be an $m$-uniform connected hypergraph on $n$ vertices.
Consider the projective eigenvarieties of $\L(G)$ and $\Q(G)$ associated with zero eigenvalues:
\begin{align}\label{v0lq}
\PV_0^\L(G)&=\{\x \in \mathbb{P}^{n-1}: \L(G)\x^{m-1}=0\},\\
\PV_0^\Q(G)&=\{\x \in \mathbb{P}^{n-1}: \Q(G)\x^{m-1}=0\}.
\end{align}

By Lemma \ref{L&sL}, if $\x$ is a first Laplacian or signless Laplacian eigenvector of $G$, then $x_i \ne 0$ for each $i \in [n]$.
So we may assume that $x_1=1$ for each $\x$ of $\PV_0^\L(G)$ or $\PV_0^\Q(G)$ such that
\begin{align}\label{normalize}
\x=(e^{\i \frac{2\pi}{m} \alpha_1},\ldots,e^{\i \frac{2\pi}{m} \alpha_n}),
\end{align}
where $\alpha_1=0$ and $\alpha_i \in \Z_m$ for $i \in [n]$.

Denote
\begin{align}\label{s0lq}
\S_0^\L(G)&=\{\y \in \Z_m^n: B_G\y=0\},\\
\S_0^\Q(G)&=\{\y \in \Z_m^n: B_G\y=\frac{m}{2}\mathbf{1}\}.
\end{align}
Let $\alpha_{\x}=(\alpha_1,\ldots,\alpha_n)$.
Then, recalling the definition of the incidence matrix $B_G$ of $G$,  $\alpha_{\x} \in \S_0^\L(G)$ from Eq. (\ref{eqL}) if $\x$ is a first Laplacian eigenvector,
or $\alpha_{\x} \in \S_0^\Q(G)$ from Eq. (\ref{eqQ}) if $\x$ is a first signless Laplacian eigenvector.

Observe that $\S_0^\L(G)$ is a $\Z_m$-submodule of $\Z_m^n$, and
$$\S_0^\Q(G)=\bar\y+\S_0^\L(G)=\{\bar\y+\y: \y\in \S_0^\L(G)\}$$ if there exists a $\bar\y \in \S_0^\Q(G)$.
Note that $B_G \mathbf{1}=0$ over $\Z_m$ since the sum of each row of $B_G$ is equal to $m$.
So, if $\y \in \S_0^\L(G)$ (or $\S_0^\Q(G)$), then $\y+t \mathbf{1} \in \S_0^\L(G)$ (or $\S_0^\Q(G)$) for any $t \in \Z_m$.
It suffices to consider
\begin{align}\label{Ps0lq}
\PS_0^\L(G)&=\{\y \in \Z_m^n: B_G\y=0,y_1=0\},\\
\PS_0^\Q(G)&=\{\y \in \Z_m^n: B_G\y=\frac{m}{2}\mathbf{1},y_1=0\}.
\end{align}
Now $\PS_0^\L(G)$ is a $\Z_m$-submodule of $\S_0^\L(G)$;
   in fact, it is isomorphic to the quotient module $\S_0^\L(G)/(\Z_m\mathbf{1})$.
We also have $\PS_0^\Q(G)=\bar\y+\PS_0^\L(G)$ if there exists a  $\bar\y \in \PS_0^\Q(G)$.

\begin{lem}\label{bij}
Let $G$ be an $m$-uniform connected hypergraph on $n$ vertices.
Then there is a bijection between $\PV_0^\L(G)$ and $\PS_0^\L(G)$, and a bijection between $\PV_0^\Q(G)$ and $\PS_0^\Q(G)$.
\end{lem}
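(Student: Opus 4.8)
The plan is to exhibit explicit mutually inverse maps in each case, the key point being that Lemma \ref{L&sL} already rewrites the eigenvector condition as a system of congruences that is exactly what cuts out $\PS_0^\L(G)$ and $\PS_0^\Q(G)$. So the content of the lemma is really a bookkeeping identification, and I would organize it around well-definedness of the normalization.

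For the Laplacian case I would first define $\Phi\colon \PV_0^\L(G)\to\PS_0^\L(G)$. Given a class in $\PV_0^\L(G)$, every first Laplacian eigenvector has all coordinates nonzero (a consequence of Lemma \ref{L&sL} already recorded above), so the class contains a unique representative $\x$ with $x_1=1$, which by Lemma \ref{L&sL}(i) may be written as in (\ref{normalize}) with $\alpha_1=0$. The tuple $\alpha_\x=(\alpha_1,\ldots,\alpha_n)$ is then well-defined in $\Z_m^n$, since $e^{\i\frac{2\pi}{m}\alpha_i}$ pins down $\alpha_i$ modulo $m$; set $\Phi([\x])=\alpha_\x$. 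The edge equations (\ref{eqL}) say $\sum_{j\in e}\alpha_j=\sigma_e m\equiv 0 \pmod m$ for every $e$, which is precisely $B_G\alpha_\x=0$ over $\Z_m$, so $\alpha_\x\in\PS_0^\L(G)$.

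Conversely I would define $\Psi\colon \PS_0^\L(G)\to\PV_0^\L(G)$ by sending $\y$ to the class of $(e^{\i\frac{2\pi}{m}y_1},\ldots,e^{\i\frac{2\pi}{m}y_n})$; since each exponential depends only on $y_i$ modulo $m$, this is independent of the integer lifts of the entries of $\y$. Lifting $B_G\y=0$ to the integer identities $\sum_{j\in e}y_j=\sigma_e m$ and applying the \emph{if} direction of Lemma \ref{L&sL}(i) shows the image lands in $\PV_0^\L(G)$. That $\Phi$ and $\Psi$ are mutually inverse is then immediate from the constructions: composing $\Psi$ with $\Phi$ returns the same tuple because the normalization $x_1=1$ matches the constraint $y_1=0$, and composing $\Phi$ with $\Psi$ returns the same projective class.

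The signless Laplacian case runs identically, with Lemma \ref{L&sL}(ii) replacing (i) and (\ref{eqQ}) replacing (\ref{eqL}): the condition $\sum_{j\in e}\alpha_j=\sigma_e m+\frac{m}{2}$ becomes $B_G\alpha_\x=\frac{m}{2}\mathbf{1}$ over $\Z_m$, which is exactly the defining relation of $\PS_0^\Q(G)$. I do not expect a genuine obstacle; the only points demanding care are the well-definedness of $\Phi$ — verifying that passing to the representative with $x_1=1$ and reading off the $\alpha_i$ in $\Z_m$ is canonical — and the matching of the two normalizations $x_1=1$ and $y_1=0$, which is what forces, and is forced by, the first-coordinate constraint built into $\PS_0^\L(G)$ and $\PS_0^\Q(G)$.
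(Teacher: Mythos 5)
Your proposal is correct and follows essentially the same route as the paper's own proof: both use Lemma \ref{L&sL} to translate the eigenvector equations into the congruences defining $\PS_0^\L(G)$ and $\PS_0^\Q(G)$, normalize via $x_1=1$ matched with $y_1=0$, and pass back and forth through the exponential map (the paper phrases this as injectivity plus surjectivity of $\Phi$, while you exhibit the explicit inverse, which is the same argument). Your extra attention to well-definedness of the normalization is a minor refinement of detail, not a different method.
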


\begin{proof}
For each $\x \in \PV_0^\L(G)$, by Lemma \ref{L&sL}(i), we may assume that $\x$ has the form of (\ref{normalize}),
 i.e. $x_i=e^{\i \frac{2\pi}{m} \alpha_i}$ for some $\alpha_i \in \Z_m$ and $i \in [n]$, where $\alpha_1=0$.
  By Eq. (\ref{eqL}), $\alpha_{\x}:=(\alpha_1,\ldots,\alpha_n) \in \PS_0^\L(G)$.
Define a map
\begin{align}\label{mapL}
\Phi: \PV_0^\L(G) \to \PS_0^\L(G),  \x \mapsto \alpha_{\x}.
\end{align}
Obviously, $\Phi$ is an injective map.
On the other hand, for each $\alpha=(\alpha_1,\ldots,\alpha_n) \in \S_0^\L(G)$, as $B_G \alpha=0$ over $\Z_m$, for each edge $e \in E(G)$,
\[ \sum_{j \in e} \alpha_j \equiv 0 \mod m.\]
Let $\x_{\alpha}:=(e^{\i \frac{2\pi}{m} \alpha_1},\ldots,e^{\i \frac{2\pi}{m} \alpha_n})$.
Then it is easy to verify $\L(G) \x_{\alpha}^{m-1}=0$, implying $\x_{\alpha} \in \PV_0^\L(G)$ and $\Phi(\x_{\alpha})=\alpha$.
So $\Phi$ is a bijection between $\PV_0^\L(G)$ and $\PS_0^\L(G)$.

Similarly, for each $\x \in \PV_0^\Q(G)$, by Lemma \ref{L&sL}(ii),
 we may assume that $x_i=e^{\i \frac{2\pi}{m} \alpha_i}$ for some $\alpha_i \in \Z_m$ and $i \in [n]$, where $\alpha_1=0$.
  By Eq. (\ref{eqQ}), $\alpha_{\x}:=(\alpha_1,\ldots,\alpha_n) \in \PS_0^\Q(G)$.
 Define a map
\begin{align}\label{mapQ}
\Psi: \PV_0^\Q(G) \to \PS_0^\Q(G),  \x \mapsto \alpha_{\x}.
\end{align}
Then $\Psi$ is a bijection between $\PV_0^\Q(G)$ and $\PS_0^\Q(G)$.
\end{proof}

As $\PS_0^\L(G)$ is a $\Z_m$-modules, we now impose a Hadamard product $\circ$ on $\PV_0^\L(G)$ such that it is also $\Z_m$-modules, that is
\[ (\x \circ \y)_i =x_i y_i, i \in [n]. \]
Then for any $\x,\y \in \PV_0^\L(G)$, keeping in mind that $\x,\y$ have the form of (\ref{normalize}), by Lemma \ref{bij},
\[\Phi^{-1}(\Phi(\x)+\Phi(\y))=\x \circ \y \in \PV_0^\L(G),\]
where $\Phi$ is defined as in (\ref{mapL}).
So $(\PV_0^\L(G),\circ)$ is an abelian group, where $\mathbf{1}$ is the identity.
Note that $\x^{\circ m}=\x^{[m]}=\mathbf{1}$, implying that $\PV_0^\L(G)$ is a $\Z_m$-module isomorphic to $\PS_0^\L(G)$.

\begin{cor}\label{iso}
Let $G$ be an $m$-uniform connected hypergraph on $n$ vertices.
Then there is an isomorphism between $\Z_m$-modules $\PV_0^\L(G)$ and $\PS_0^\L(G)$.
\end{cor}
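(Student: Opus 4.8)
The plan is to promote the set-theoretic bijection $\Phi$ of Lemma \ref{bij} to an isomorphism of $\Z_m$-modules, with $\PV_0^\L(G)$ carrying the Hadamard product $\circ$ and $\PS_0^\L(G)$ its usual $\Z_m$-module structure. First I would confirm that $(\PV_0^\L(G), \circ)$ is an abelian group. For $\x, \y$ written in the normalized form (\ref{normalize}), say $x_i = e^{\i \frac{2\pi}{m}\alpha_i}$ and $y_i = e^{\i \frac{2\pi}{m}\beta_i}$ with $\alpha_\x = (\alpha_1,\ldots,\alpha_n)$ and $\alpha_\y = (\beta_1,\ldots,\beta_n)$ in $\PS_0^\L(G)$, the entrywise product satisfies $(\x \circ \y)_i = e^{\i \frac{2\pi}{m}(\alpha_i + \beta_i)}$, so $\alpha_{\x \circ \y} = \alpha_\x + \alpha_\y$; since $\PS_0^\L(G)$ is closed under addition, $\x \circ \y$ again lies in $\PV_0^\L(G)$. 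The identity is $\mathbf{1}$, and the inverse of $\x$ is its entrywise complex conjugate, so the group axioms hold.

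The central observation is that the same computation shows $\Phi$ is additive, namely $\Phi(\x \circ \y) = \alpha_\x + \alpha_\y = \Phi(\x) + \Phi(\y)$, which is precisely the identity $\Phi^{-1}(\Phi(\x) + \Phi(\y)) = \x \circ \y$ recorded just before the statement. Together with bijectivity from Lemma \ref{bij}, this already makes $\Phi$ an isomorphism of abelian groups. To obtain a $\Z_m$-module isomorphism, I would define the scalar action $t \cdot \x := \x^{\circ t}$ for $t \in \Z_m$, the $t$-fold Hadamard power, and check that $\Phi$ intertwines it with scalar multiplication on $\PS_0^\L(G)$, since $\alpha_{\x^{\circ t}} = t\,\alpha_\x$.

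The step needing the most care is the well-definedness of this scalar action modulo $m$. Because $x_i^m = 1$ for each $i$, one has $\x^{\circ m} = \x^{[m]} = \mathbf{1}$, so every element of $(\PV_0^\L(G), \circ)$ has order dividing $m$ and the natural $\Z$-action descends to a genuine $\Z_m$-action. No deep computation is involved; the only obstacle is this bookkeeping, namely verifying that Hadamard products and powers keep vectors inside $\PV_0^\L(G)$ (guaranteed by the module closure of $\PS_0^\L(G)$) and that the exponent relation $\x^{\circ m} = \mathbf{1}$ forces the action to factor through $\Z_m$. Once these are in place, $\Phi$ is a bijective, additive, scalar-compatible map, hence the desired isomorphism of $\Z_m$-modules.
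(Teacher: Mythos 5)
Your proposal is correct and follows essentially the same route as the paper: the paper also equips $\PV_0^\L(G)$ with the Hadamard product, uses the bijection $\Phi$ of Lemma \ref{bij} to get $\Phi^{-1}(\Phi(\x)+\Phi(\y))=\x\circ\y$ (i.e.\ additivity), and invokes $\x^{\circ m}=\x^{[m]}=\mathbf{1}$ so that the action descends to $\Z_m$. Your write-up merely makes explicit the well-definedness checks (closure, inverses, scalar compatibility) that the paper leaves implicit.
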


\begin{thm}\label{noL}
Let $G$ be an $m$-uniform connected hypergraph on $n$ vertices.
Suppose the incidence matrix $B_G$ has a Smith normal form over $\Z_m$ as in (\ref{smith}).
Then $1 \le r \le n-1$, and
\begin{align}\label{Lstru}
\PV_0^\L(G) \cong \PS_0^\L(G) \cong \oplus_{i, d_i \ne 1} \Z_{d_i} \oplus \underbrace{\Z_m \oplus \cdots \oplus \Z_m}_{n-1-r \text{\em \small~copies~}}.
\end{align}

Consequently, $G$ has $m^{n-1-r} \Pi_{i=1}^r d_i$ first Laplacian eigenvectors,
and the composition length of the $\Z_m$-module $\PV_0^\L(G)$ is
$\sum_{i, d_i \ne 1} \cl(d_i)+ (n-1-r)\cl(m)$.
\end{thm}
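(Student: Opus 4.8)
The plan is to reduce everything to the $\Z_m$-module $\PS_0^\L(G)$ and then read off its structure from the Smith normal form of $B_G$. By Corollary \ref{iso} we already have $\PV_0^\L(G)\cong\PS_0^\L(G)$ as $\Z_m$-modules, so it suffices to identify $\PS_0^\L(G)$ with the right-hand side of (\ref{Lstru}). Recalling that $\PS_0^\L(G)\cong\S_0^\L(G)/(\Z_m\mathbf{1})$ and that the Smith normal form computation in the preliminaries gives
\[\S_0^\L(G)=\{\y\in\Z_m^n:B_G\y=0\}\cong\oplus_{i,\,d_i\ne1}\Z_{d_i}\oplus(\Z_m)^{\,n-r},\]
the whole theorem comes down to pinning down the range of $r$ and then computing the quotient by the cyclic submodule generated by $\mathbf{1}$.

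For the bounds on $r$, I would first observe that $B_G$ is a nonzero matrix over $\Z_m$: a connected hypergraph has at least one edge, whose row in $B_G$ carries $m$ ones, each equal to $1\ne0$ in $\Z_m$. Since $PB_GQ=0$ with $P,Q$ invertible would force $B_G=0$, this rules out $r=0$ and gives $r\ge1$. For the upper bound I would exploit the key feature of (\ref{smith}) that every invariant divisor satisfies $1\le d_i\le m-1$, so each torsion summand $\Z_{d_i}$ has exponent $d_i<m$. On the other hand $\mathbf{1}\in\S_0^\L(G)$ (its image $B_G\mathbf{1}$ has entries equal to the row sums $m\equiv0$) and $\mathbf{1}$ has additive order exactly $m$ in $\Z_m^n$. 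Hence the exponent of $\S_0^\L(G)$ is $m$, which is impossible if the free part $(\Z_m)^{\,n-r}$ is absent; therefore $n-r\ge1$, i.e. $r\le n-1$.

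The decisive step is computing $\S_0^\L(G)/(\Z_m\mathbf{1})$. Since $n-r\ge1$, the exponent of the finite abelian group $\S_0^\L(G)$ equals $m$, and $\mathbf{1}$ is an element of this maximal order; by the standard fact that a cyclic subgroup of maximal order is a direct summand, $\Z_m\mathbf{1}\cong\Z_m$ splits off, so $\S_0^\L(G)\cong\Z_m\oplus\PS_0^\L(G)$. Comparing with the decomposition of $\S_0^\L(G)$ above and cancelling one copy of $\Z_m$ (cancellation is valid for finite abelian groups by uniqueness of the invariant factor decomposition) yields exactly $\PS_0^\L(G)\cong\oplus_{i,\,d_i\ne1}\Z_{d_i}\oplus(\Z_m)^{\,n-1-r}$, which is (\ref{Lstru}). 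The counting consequences are then immediate: the cardinality of this module is $\big(\prod_{i,\,d_i\ne1}d_i\big)m^{\,n-1-r}=m^{\,n-1-r}\prod_{i=1}^r d_i$, since the factors with $d_i=1$ contribute $1$, and additivity of composition length over direct sums together with $\cl(\Z_{d_i})=\cl(d_i)$ and $\cl(\Z_m)=\cl(m)$ gives $\sum_{i,\,d_i\ne1}\cl(d_i)+(n-1-r)\cl(m)$.

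I expect the main obstacle to be the quotient computation: one must make sure that $\mathbf{1}$ really sits inside a free $\Z_m$ summand rather than being entangled with the torsion summands $\Z_{d_i}$. This is precisely what the maximal-order argument secures, and it is the place where the hypothesis $d_i\le m-1$ (forcing the torsion exponent strictly below $m$) is essential; without it one could not conclude that quotienting by $\Z_m\mathbf{1}$ removes exactly one free summand, nor even that the exponent of $\S_0^\L(G)$ attains $m$.
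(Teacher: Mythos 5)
Your proof is correct, and its skeleton is the same as the paper's: both reduce, via Corollary \ref{iso} and the identification $\PS_0^\L(G)\cong\S_0^\L(G)/(\Z_m\mathbf{1})$, to computing the quotient of $\S_0^\L(G)\cong\oplus_{i,d_i\ne1}\Z_{d_i}\oplus\Z_m^{\,n-r}$ by the cyclic submodule generated by $\mathbf{1}$. Where you genuinely diverge is in how the two nontrivial steps are carried out. For $r\le n-1$, the paper argues with matrix operations: since $B_G\mathbf{1}=0$ over $\Z_m$, adding all other columns to the last column of $B_G$ creates a zero column, so the Smith normal form must have a zero diagonal entry; you instead argue with element orders, observing that $\mathbf{1}\in\S_0^\L(G)$ has order exactly $m$ while a purely torsion solution module would have smaller exponent. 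For the quotient, the paper transports $\mathbf{1}$ through $Q^{-1}$ into $\S'_0=\{\y:PB_GQ\y=0\}$ and then simply asserts $\S'_0/(\Z_m(Q^{-1}\mathbf{1}))\cong\oplus_{i,d_i\ne1}\Z_{d_i}\oplus\Z_m^{\,n-1-r}$, whereas you prove the corresponding fact intrinsically: a cyclic subgroup generated by an element of maximal order in a finite abelian group is a direct summand, and finite abelian groups cancel in direct sums. Your route is more self-contained---it supplies exactly the justification the paper leaves implicit---at the cost of invoking two pieces of structure theory that the paper's matrix manipulations avoid. One point must be tightened: twice you deduce that the torsion part has exponent less than $m$ from the bound $d_i\le m-1$ alone; for a general direct sum this inference fails (e.g.\ $\Z_2\oplus\Z_3$ has exponent $6$), so you need the divisibility chain $d_1\mid d_2\mid\cdots\mid d_r$ in (\ref{smith}), which makes the torsion exponent equal to $d_r$, a proper divisor of $m$. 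With that one-line correction, both your bound $r\le n-1$ and the subsequent cancellation step (which needs $n-r\ge1$ to exhibit a free $\Z_m$ summand on the right-hand side) are complete.
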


\begin{proof}
As $G$ is connected, $B_G \ne 0$, implying that $r \ge 1$.
Also, as $B_G \mathbf{1} =0$, adding all other columns to the last column of $B_G$, the last column of the resulting matrix becomes a zero column,
   which keeps invariant under any elementary row transformations and column transformations.
So $PB_G Q$ always has a zero column, implying that $r \le n-1$.

Let $\S'_0=\{\y \in \Z_m^n: P B_G Q \y=0\}$.
Then $\y \in \S_0^\L(G)$ if and only if $Q^{-1} \y \in \S'_0$, implying that $\S_0^\L(G)$ is $\Z_m$-module isomorphic to $\S'_0$.
It is easy to see
$$\S'_0 \cong \oplus_{i, d_i \ne 1} \Z_{d_i} \oplus \underbrace{\Z_m \oplus \cdots \oplus \Z_m}_{n-r \text{\small~copies~}}.$$
So by  Corollary \ref{iso} and the discussion in Section 2.5,
$$\PV_0^\L(G) \cong \PS_0^\L(G) \cong \S_0^\L(G)/(\Z_m \mathbf{1}) \cong \S'_0/(\Z_m (Q^{-1}\mathbf{1})) \cong
\oplus_{i, d_i \ne 1} \Z_{d_i} \oplus \underbrace{\Z_m \oplus \cdots \oplus \Z_m}_{n-1-r \text{\small~copies~}}.$$
\end{proof}

\begin{cor}\label{noS}
Let $G$ be an $m$-uniform connected hypergraph on $n$ vertices.
Suppose that $\Q(G)$ has a zero eigenvalue, and the incidence matrix $B_G$ has a Smith normal form over $\Z_m$ as in (\ref{smith}).
Then $G$ has $m^{n-1-r} \Pi_{i=1}^r d_i$ first signless Laplacian eigenvectors, which is equal to the number of first Laplacian eigenvectors.
\end{cor}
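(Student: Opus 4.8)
The plan is to reduce the count of first signless Laplacian eigenvectors to the count of first Laplacian eigenvectors already computed in Theorem \ref{noL}, exploiting the fact that $\PS_0^\Q(G)$ is a coset of the submodule $\PS_0^\L(G)$ whenever it is nonempty. The whole argument is essentially bookkeeping on top of the structural results already established.

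First I would observe that the hypothesis that $\Q(G)$ has a zero eigenvalue means precisely that $\PV_0^\Q(G)$ is nonempty. By the bijection $\Psi\colon \PV_0^\Q(G)\to \PS_0^\Q(G)$ of Lemma \ref{bij}, this forces $\PS_0^\Q(G)$ to be nonempty as well, so there exists some $\bar\y \in \PS_0^\Q(G)$. This is the one place where the standing hypothesis is genuinely used: without a particular solution $\bar\y$ of $B_G\y=\frac{m}{2}\mathbf{1}$ with $y_1=0$, there would be nothing to translate by.

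Next I would invoke the coset decomposition recorded just before Lemma \ref{bij}: once such a $\bar\y$ exists, the full solution set is $\PS_0^\Q(G)=\bar\y+\PS_0^\L(G)$, the translate of the $\Z_m$-submodule $\PS_0^\L(G)$ by $\bar\y$. Indeed, for any $\y\in \PS_0^\L(G)$ one has $B_G(\bar\y+\y)=\frac{m}{2}\mathbf{1}$ and $(\bar\y+\y)_1=0$, so $\bar\y+\y\in \PS_0^\Q(G)$; conversely any $\z\in \PS_0^\Q(G)$ satisfies $B_G(\z-\bar\y)=0$ and $(\z-\bar\y)_1=0$, so $\z-\bar\y\in \PS_0^\L(G)$. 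Since a coset of a module has the same cardinality as the module itself, it follows that $|\PS_0^\Q(G)|=|\PS_0^\L(G)|$.

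Finally I would chain the two bijections of Lemma \ref{bij} with Corollary \ref{iso} to conclude that $|\PV_0^\Q(G)|=|\PS_0^\Q(G)|=|\PS_0^\L(G)|=|\PV_0^\L(G)|$, and then read off the common value from Theorem \ref{noL}, which evaluates it as $m^{n-1-r}\prod_{i=1}^r d_i$. I expect no real obstacle in carrying this out; the only subtlety is the nonemptiness step, and all the algebraic content (the module isomorphisms and the Smith normal form count) has already been packaged into the earlier statements, so the corollary follows immediately once the coset structure is in place.
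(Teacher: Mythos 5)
Your proposal is correct and follows essentially the same route as the paper's own proof: nonemptiness of $\PS_0^\Q(G)$ via Lemma \ref{bij}, the coset identity $\PS_0^\Q(G)=\bar\y+\PS_0^\L(G)$ from the discussion preceding that lemma, equality of cardinalities, and then Theorem \ref{noL}. The only difference is that you spell out the coset verification and the final chain of bijections explicitly, which the paper leaves implicit.
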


\begin{proof}
If $\Q(G)$ has a zero eigenvalue, then $\PV_0^Q(G)$ and hence $\PS_0^Q(G)$ is nonempty by Lemma \ref{bij}.
Let $\bar\y \in \PS_0^Q(G)$.
By the discussion prior to Lemma \ref{bij}, $\PS_0^\Q(G)=\bar\y+\PS_0^\L(G)$,
implying that $\PS_0^\Q(G)$ has the same cardinality as $\PS_0^\L(G)$.
The result follows from Theorem \ref{noL}.
\end{proof}

Finally we discuss when $\Q(G)$ has a zero eigenvalue. In fact the answer was given by Lemma \ref{L&sL}(ii).
Here we use another language arising from hypergraphs.

\begin{defi}\cite{Ni}
Let $G$ be an $m$-uniform hypergraph on $n$ vertices, where $m$ is even.
The hypergraph $G$ is called \emph{odd-colorable} if there exists a map
$f: V(G) \to [m]$ such that for each $e \in E(G)$,
\[ \sum_{v \in e} f(v)\equiv \frac{m}{2} \mod m.\]
The map $f$ is called an \emph{odd-coloring} of $G$.
\end{defi}

So, by Lemma \ref{L&sL}(ii), we get the following corollary immediately.

\begin{cor}\label{0-odd}
Let $G$ be an $m$-uniform connected hypergraph on $n$ vertices.
Then $\Q(G)$ has a zero eigenvalue if and only if $m$ is even and $G$ is odd-colorable.
\end{cor}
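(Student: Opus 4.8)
The plan is to translate the existence of a zero eigenvalue directly into the combinatorial condition of odd-colorability, using Lemma~\ref{L&sL}(ii) as the bridge. First I would observe that, by definition, zero is an eigenvalue of $\Q(G)$ if and only if $\PV_0^\Q(G)$ is nonempty, i.e. if and only if $G$ possesses a first signless Laplacian eigenvector $\x$. Lemma~\ref{L&sL}(ii) then characterizes such an $\x$: it exists precisely when there are integers $\alpha_1,\dots,\alpha_n$ together with, for every edge $e\in E(G)$, an integer $\sigma_e$ satisfying
\[ \sum_{j\in e}\alpha_j=\sigma_e m+\frac{m}{2}. \]
The entire argument then consists of determining when this system is solvable over $\Z$ and recognizing the solvability condition as odd-colorability.

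The necessity of $m$ being even emerges from a parity inspection of this identity. The left-hand side $\sum_{j\in e}\alpha_j$ is a sum of integers, hence an integer, and $\sigma_e m$ is an integer as well; therefore $\frac{m}{2}$ must be an integer, forcing $m$ to be even. In particular, if $m$ is odd the system has no integer solution, so $G$ has no first signless Laplacian eigenvector and $\Q(G)$ has no zero eigenvalue. This already settles the ``only if'' half insofar as it concerns the parity of $m$.

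Assuming now that $m$ is even, I would reduce the displayed identity modulo $m$ to obtain, for every edge $e$, the congruence $\sum_{j\in e}\alpha_j\equiv \frac{m}{2}\pmod m$. Defining $f(v):=\alpha_v\bmod m$ and identifying the residue $0$ with $m$ so that $f$ maps $V(G)$ into $[m]$, this congruence is exactly the odd-coloring condition of the preceding definition, so the $\alpha_v$ furnish an odd-coloring of $G$. Conversely, any odd-coloring $f$ provides integers $\alpha_v:=f(v)$ satisfying $\sum_{j\in e}\alpha_j\equiv \frac{m}{2}\pmod m$; choosing each $\sigma_e$ to absorb the appropriate multiple of $m$ recovers the displayed identity, and Lemma~\ref{L&sL}(ii) then yields a first signless Laplacian eigenvector. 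Combining both directions gives the equivalence: $\Q(G)$ has a zero eigenvalue if and only if $m$ is even and $G$ is odd-colorable.

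There is no genuine obstacle here---this is why the result can be stated as an immediate corollary---but the one point deserving care is the passage between integer tuples $(\alpha_v)$ and colorings $f: V(G)\to[m]$. One must check that reducing the $\alpha_v$ modulo $m$ and selecting representatives in $[m]$ is harmless, which holds because the eigenvector $\x$ depends on the $\alpha_v$ only through the roots of unity $e^{\i\frac{2\pi}{m}\alpha_v}$, and these are unchanged when any $\alpha_v$ is shifted by a multiple of $m$. Everything else is a verbatim transcription of Lemma~\ref{L&sL}(ii) into the language of odd-colorings.
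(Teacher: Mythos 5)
Your proposal is correct and follows exactly the route the paper takes: the paper derives Corollary~\ref{0-odd} ``immediately'' from Lemma~\ref{L&sL}(ii), and your argument simply spells out that translation --- the parity of $m$ forced by the integrality of $\sum_{j\in e}\alpha_j - \sigma_e m$, and the identification of the residues $\alpha_v \bmod m$ with an odd-coloring $f: V(G)\to[m]$. No discrepancy with the paper's reasoning; you have merely made the ``immediate'' step explicit.
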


We also have some equivalent characterizations for a uniform connected hypergraph having zero signless Laplacian eigenvalue
by combining the results or using the techniques in \cite[Theorem]{SSW}, \cite[Theorem 18, Corollary 19]{Ni} and \cite[Theorem 3.2]{FKT}.

\begin{thm}\label{char-odd}
Let $G$ be an $m$-uniform connected hypergraph on $n$ vertices.
Then the following are equivalent.

\begin{enumerate}

\item  $m$ is even and $G$ is odd-colorable.

\item  $0$ is an eigenvalue of $\Q(G)$.

\item  The linear equation $B_G\y=\frac{m}{2}\mathbf{1}$ has a solution over $\Z_m$.

\item  $\Q(G)=D^{-(m-1)}\L(G)D$ for some diagonal matrix $D$ with $|D|=\I$.

\item  $\Spec(\L(G))=\Spec(\Q(G))$.

\item  $\rho(\L(G))=\rho(\Q(G))$.

\item  $\A(G)=-D^{-(m-1)}\A(G)D$ for some diagonal matrix $D$ with $|D|=\I$.

\item  $\Spec(\A(G))=-\Spec(\A(G))$.

\item  $-\rho(\A(G))$ is an eigenvalue of $\A(G)$.
\end{enumerate}
\end{thm}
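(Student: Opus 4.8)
The plan is to prove the nine statements equivalent by organizing them into two blocks, $\{(1),(2),(3)\}$ and $\{(4),\dots,(9)\}$, establishing all the equivalences inside each block and then joining the blocks with two bridging implications. The first block records the solvability/colorability content and is essentially handed to us by earlier results: $(1)\Leftrightarrow(2)$ is precisely Corollary \ref{0-odd}, while $(2)\Leftrightarrow(3)$ follows from Lemma \ref{bij}, since $0$ being an eigenvalue of $\Q(G)$ means $\PV_0^\Q(G)\neq\emptyset$, which by the bijection is equivalent to $\PS_0^\Q(G)\neq\emptyset$, i.e. to the solvability of $B_G\y=\tfrac{m}{2}\mathbf{1}$ over $\Z_m$. (Throughout, the substantive case is $m$ even; when $m$ is odd every statement fails, as already $(1)$ and $(2)$ do.)

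For the second block I would first fix the purely algebraic backbone $(4)\Leftrightarrow(7)$. The key observations are that a diagonal tensor is fixed by diagonal similarity, so $D^{-(m-1)}\D(G)D=\D(G)$ for any diagonal $D$, and that $\L(G)=\D(G)-\A(G)$, $\Q(G)=\D(G)+\A(G)$. Hence $D^{-(m-1)}\L(G)D=\D(G)-D^{-(m-1)}\A(G)D$, so the identity $\Q(G)=D^{-(m-1)}\L(G)D$ is literally equivalent to $\A(G)=-D^{-(m-1)}\A(G)D$; this gives $(4)\Leftrightarrow(7)$ with the same $D$. I would then run two short spectral sub-cycles. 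For $(4)\Rightarrow(5)\Rightarrow(6)$: diagonal-similar tensors share a spectrum \cite{Shao}, so $(4)$ forces $\Spec(\L(G))=\Spec(\Q(G))$, and $(5)\Rightarrow(6)$ is immediate since the spectral radius is the largest modulus in the spectrum. Symmetrically, $(7)\Rightarrow(8)\Rightarrow(9)$: from $(7)$, $\A(G)$ is diagonal similar to $-\A(G)$, so $\Spec(\A(G))=\Spec(-\A(G))=-\Spec(\A(G))$, giving $(8)$; and since $\rho(\A(G))$ is an eigenvalue of the nonnegative weakly irreducible tensor $\A(G)$ by the Perron--Frobenius theorem \cite{FGH,YY3}, $(8)$ places $-\rho(\A(G))$ in the spectrum as well, giving $(9)$.

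The real work is closing these sub-cycles via Theorem \ref{PF2}(2), and here the essential remark is that $|\L(G)|=\Q(G)$: the diagonal (degree) entries agree, while the off-diagonal entries of $\L(G)$ are exactly the negatives of those of $\Q(G)$; thus $|\L(G)|\le\Q(G)$ with $\Q(G)$ weakly irreducible. For $(6)\Rightarrow(4)$ I would pick an eigenvalue $\lambda$ of $\L(G)$ of modulus $\rho(\L(G))=\rho(\Q(G))$, write $\lambda=\rho(\Q(G))e^{\i\theta}$, and apply Theorem \ref{PF2}(2) with the roles $\A=\Q(G)$ and $\B=\L(G)$ to obtain a zero-free eigenvector $\y$ and $\Q(G)=e^{-\i\theta}D^{-(m-1)}\L(G)D$ with $D=\diag(y_j/|y_j|)$; comparing diagonal entries (both sides carry the positive degrees on the diagonal, where $D$ acts trivially) forces $e^{-\i\theta}=1$, hence $\theta=0$ and exactly $(4)$. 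The same device gives $(9)\Rightarrow(7)$: applying Theorem \ref{PF2}(2) with $\B=\A(G)$ and $\A=\A(G)$ to the eigenvalue $-\rho(\A(G))=\rho(\A(G))e^{\i\pi}$ yields $\A(G)=e^{-\i\pi}D^{-(m-1)}\A(G)D=-D^{-(m-1)}\A(G)D$, i.e. $(7)$. Finally I would bridge the blocks: $(3)\Rightarrow(4)$ by taking a solution $\alpha=(\alpha_1,\dots,\alpha_n)$ of $B_G\y=\tfrac{m}{2}\mathbf{1}$, setting $D=\diag(e^{\i\frac{2\pi}{m}\alpha_j})$, and checking on each edge $e$ that $-(m-1)\alpha_{i_1}+\sum_{k\ge 2}\alpha_{i_k}\equiv\tfrac{m}{2}\pmod m$, so that $D^{-(m-1)}\A(G)D=-\A(G)$; and $(5)\Rightarrow(2)$ since $0\in\Spec(\L(G))$ always, whence $0\in\Spec(\Q(G))$. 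I expect the phase-pinning step in $(6)\Rightarrow(4)$ --- recognizing $|\L(G)|=\Q(G)$ and extracting $\theta=0$ from the diagonal --- to be the main obstacle, since it is exactly what converts the purely metric hypothesis $\rho(\L(G))=\rho(\Q(G))$ into the rigid multiplicative relation $(4)$.
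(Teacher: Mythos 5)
Your proof is correct, and its skeleton coincides with the paper's own: the same two blocks $\{(1),(2),(3)\}$ and $\{(4),\dots,(9)\}$, the same cycles $(4)\Rightarrow(5)\Rightarrow(6)\Rightarrow(4)$ and $(7)\Rightarrow(8)\Rightarrow(9)\Rightarrow(7)$ closed by Theorem \ref{PF2}(2), with the identical phase-pinning step (comparing diagonal entries to force $e^{-\i\theta}=1$), and the same algebraic identification $(4)\Leftrightarrow(7)$ from the product formula (\ref{product}). Where you genuinely diverge is the bridge between the two blocks: the paper joins them by quoting Corollary 19 of \cite{Ni} for $(1)\Leftrightarrow(8)$, whereas you prove $(3)\Rightarrow(7)$ (hence $(4)$) by hand, building $D=\diag\bigl(e^{\i\frac{2\pi}{m}\alpha_1},\ldots,e^{\i\frac{2\pi}{m}\alpha_n}\bigr)$ from a solution $\alpha$ of $B_G\y=\frac{m}{2}\mathbf{1}$ and checking edgewise that $-(m-1)\alpha_{i_1}+\sum_{k\ge2}\alpha_{i_k}\equiv\frac{m}{2}\pmod m$ gives $D^{-(m-1)}\A(G)D=-\A(G)$, then returning via the trivial $(5)\Rightarrow(2)$ from $0\in\Spec(\L(G))$. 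Your route has two advantages: it makes the theorem self-contained (the only external inputs are Theorem \ref{PF2} and the spectral invariance under diagonal similarity of \cite{Shao}, both already used elsewhere), and it surfaces two hypotheses the paper leaves tacit, namely that $|\L(G)|=\Q(G)$ with $\Q(G)$ weakly irreducible (which is exactly what licenses applying Theorem \ref{PF2}(2) with $\A=\Q(G)$, $\B=\L(G)$) and that $(8)\Rightarrow(9)$ needs the Perron--Frobenius fact that $\rho(\A(G))$ itself lies in $\Spec(\A(G))$. The paper's citation of \cite{Ni} is shorter but imports Nikiforov's symmetric-spectrum results; your bridge costs only the same modular computation that already underlies Lemma \ref{L&sL} and Lemma \ref{bij}, so nothing new is really needed.
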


\begin{proof}
We have known $(1) \Leftrightarrow (2)\Leftrightarrow (3)$ by Corollary \ref{0-odd} and Lemma \ref{bij}(ii), and $(4) \Rightarrow (5) \Rightarrow (6)$, $(7) \Rightarrow (8) \Rightarrow (9)$ as two diagonal similar tensors have the same spectra \cite{Shao}.
By the definition (\ref{product}), $(4) \Leftrightarrow (7)$. By Corollary 19 of \cite{Ni}, $(1) \Leftrightarrow (8)$.
So it suffices to prove $(6) \Rightarrow (4)$ and $(9) \Rightarrow (7)$.

If $\rho(\L(G))=\rho(\Q(G))$, taking $\la=\rho(\Q(G))e^{\i \theta}$ as an eigenvalue of $\L(G)$,
by Theorem \ref{PF2}(2), there exists a diagonal matrix $D$ with $|D|=\mathcal{I}$  such that
$$\Q(G)=e^{-\i \theta} D^{-(m-1)}\L(G)D.$$
So, $e^{-\i \theta}=1$ by comparing the diagonal entries of both sides, and hence
$\Q(G)=D^{-(m-1)}\L(G)D$.

If $-\rho(\A(G))$ is an eigenvalue of $\A(G)$, also using Theorem \ref{PF2}(2) by taking $\B=\A$,
we have a diagonal matrix $D$ with $|D|=\mathcal{I}$  such that
$\A(G)=-D^{-(m-1)}\A(G)D$.
\end{proof}

\section{First Laplacian or signless Laplacian H-eigenvectors}
Let $G$ be an $m$-uniform connected hypergraph on $n$ vertices.
Let $\x$ be a first Laplacian or signless Laplacian eigenvector of $G$.
By Lemma \ref{L&sL}, recalling that $\x$ has the form as in (\ref{normalize}), i.e.
\begin{align*}
\x=(e^{\i \frac{2\pi}{m} \alpha_1},\ldots,e^{\i \frac{2\pi}{m} \alpha_n}),
\end{align*}
where $\alpha_1=0$ and $\alpha_i \in \Z_m$ for $i \in [n]$.
If $\x$ is an H-eigenvector, then $x_i \in \{1,-1\}$, i.e. $\alpha_i \in \{0, \frac{m}{2}\}$ for $i \in [n]$,
where $x_1=1$ as $\alpha_1=0$.
We easily get the following result by the above discussion and Corollary \ref{0-odd},
which has been proved by Hu and Qi \cite{HQ}.

\begin{thm}\cite{HQ}\label{modd}
Let $G$ be an $m$-uniform hypergraph on $n$ vertices.
If $m$ is odd, then $G$ has the $\mathbf{1}$ as the only first Laplacian H-eigenvector, and has no first signless Laplacian H-eigenvectors.
\end{thm}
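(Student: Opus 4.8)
The plan is to combine the parity of $m$ with the reduction for H-eigenvectors recorded in the paragraph immediately preceding the statement, together with Lemma \ref{L&sL} and Corollary \ref{0-odd}. The entire argument hinges on the single observation that $\frac{m}{2}$ fails to be an integer when $m$ is odd, which simultaneously controls both assertions.

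For the Laplacian part, I would begin from the normalization just recalled: any first Laplacian H-eigenvector $\x$ may be written as $x_i=e^{\i \frac{2\pi}{m}\alpha_i}$ with $\alpha_1=0$, $\alpha_i \in \Z_m$, and each $x_i \in \{1,-1\}$. The value $x_i=1$ forces $\alpha_i \equiv 0 \pmod m$, while $x_i=-1$ forces $\alpha_i \equiv \frac{m}{2} \pmod m$. The latter congruence has no solution in integers $\alpha_i$ when $m$ is odd, so the case $x_i=-1$ cannot occur; hence $x_i=1$ for every $i \in [n]$, that is, $\x=\mathbf{1}$. Since $\L(G)$ always has $\mathbf{1}$ as a first Laplacian eigenvector and $\mathbf{1}$ is manifestly real, it is indeed the unique first Laplacian H-eigenvector.

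For the signless Laplacian part, the quickest route is Corollary \ref{0-odd}, which states that $\Q(G)$ has a zero eigenvalue if and only if $m$ is even and $G$ is odd-colorable. Since $m$ is odd, $\Q(G)$ has no zero eigenvalue, so $\PV_0^\Q(G)=\emptyset$ and $G$ has no first signless Laplacian eigenvectors at all, hence in particular none that are H-eigenvectors. Alternatively, one argues directly from Lemma \ref{L&sL}(ii): its defining relation $\sum_{j \in e}\alpha_j=\sigma_e m+\frac{m}{2}$ has an integer left-hand side but a non-integer right-hand side when $m$ is odd, so no integers $\alpha_i$ exist and the eigenvector set is empty.

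I do not expect a genuine obstacle here, as the content is essentially the parity remark $\frac{m}{2}\notin\Z$ for odd $m$. The only point deserving care is to confirm that the two possibilities $x_i=\pm 1$ are exhaustive for a real (unit-modulus) eigenvector, which is precisely the reduction already established in the paragraph before the theorem; once that is in hand, both clauses follow without further computation.
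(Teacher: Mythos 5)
Your proposal is correct and follows exactly the paper's own route: the paper derives this theorem from the reduction $\alpha_i \in \{0,\frac{m}{2}\}$ for H-eigenvectors in the paragraph preceding the statement (impossible for odd $m$ unless $\alpha_i=0$) together with Corollary \ref{0-odd} for the signless Laplacian part. Your alternative direct argument via Lemma \ref{L&sL}(ii) is also valid, but it is just the same parity observation stated at the level of the defining equation.
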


Suppose $m$ is even in the following.
So, if $\x$ is a first Laplacian H-eigenvector, letting $\beta_i=\alpha_i /(m/2)$ for $i \in [n]$, we have
\begin{align}\label{Hnorm}
\x=((-1)^{\beta_1}, \ldots, (-1)^{\beta_n}),
\end{align}
where $\beta_1=0$ and $\beta_i \in \Z_2$ for $i \in [n]$.
Dividing the Eq. (\ref{eqL}) by $m/2$, for each edge $e \in E(G)$,
\begin{align}\label{HeqL} \sum_{j \in e}\beta_j \equiv 0 \mod 2.
\end{align}

Similarly, if $\x$ is a first signless Laplacian H-eigenvector,
then $\x$ also has the form as in (\ref{Hnorm}), and by  Eq. (\ref{eqQ}) for each edge $e \in E(G)$,
\begin{align}\label{HeqQ} \sum_{j \in e}\beta_j \equiv 1 \mod 2.
\end{align}

We now consider the following two sets:
\begin{align}\label{Ps0lqH}
\HPS_0^\L(G)&=\{\y \in \Z_2^n: B_G\y=0,y_1=0\},\\
\HPS_0^\Q(G)&=\{\y \in \Z_2^n: B_G\y=\mathbf{1},y_1=0\}.
\end{align}
As $\Z_2$ is a field, $\HPS_0^\L(G)$ is a linear space over $\Z_2$.
If $\bar\y \in \HPS_0^\Q(G)$, then $\HPS_0^\Q(G)=\bar\y+\HPS_0^\L(G)$, a affine space over $\Z_2$.
Denote
\begin{align}\label{Hv0lq}
\HPV_0^\L(G)&=\{\x \in \mathbb{P}^{n-1}: \L(G)\x^{m-1}=0, \x \mbox{~is an H-eigenvector}\},\\
\HPV_0^\Q(G)&=\{\x \in \mathbb{P}^{n-1}: \Q(G)\x^{m-1}=0, \x \mbox{~is an H-eigenvector}\}.
\end{align}

\begin{lem}\label{Hbij}
Let $G$ be an $m$-uniform connected hypergraph on $n$ vertices.
Then there is a bijection between $\HPV_0^\L(G)$ and $\HPS_0^\L(G)$, and a bijection between $\HPV_0^\Q(G)$ and $\HPS_0^\Q(G)$.
\end{lem}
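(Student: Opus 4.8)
The plan is to mirror the proof of Lemma \ref{bij}, specializing from $\Z_m$ to $\Z_2$ and from general $m$-th roots of unity to entries in $\{1,-1\}$. The discussion preceding the statement (Eqs. (\ref{Hnorm})--(\ref{HeqQ})) has already done most of the work: every first Laplacian (resp. signless Laplacian) H-eigenvector $\x$ is determined, up to the normalization $x_1=1$, by a vector $\beta_{\x}=(\beta_1,\ldots,\beta_n)$ with $\beta_1=0$ and $\beta_i\in\Z_2$ via $x_i=(-1)^{\beta_i}$, and the edge conditions (\ref{HeqL}) (resp. (\ref{HeqQ})) translate directly into $B_G\beta_{\x}=0$ (resp. $B_G\beta_{\x}=\mathbf{1}$) over $\Z_2$. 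Thus $\beta_{\x}$ lies in $\HPS_0^\L(G)$ (resp. $\HPS_0^\Q(G)$), and the candidate bijection is the assignment $\x\mapsto\beta_{\x}$.

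For the Laplacian case I would define $\Phi^{\rm H}\colon\HPV_0^\L(G)\to\HPS_0^\L(G)$, $\x\mapsto\beta_{\x}$. Injectivity is immediate: since the entries of $\x$ are recovered from $\beta_{\x}$ by $x_i=(-1)^{\beta_i}$, two distinct H-eigenvectors cannot share the same $\beta$. For surjectivity I would exhibit the inverse: given $\beta\in\HPS_0^\L(G)$, set $\alpha_i=\beta_i\,(m/2)$ (an integer since $m$ is even, assumed throughout this section) and $\x_\beta=(e^{\i\frac{2\pi}{m}\alpha_1},\ldots,e^{\i\frac{2\pi}{m}\alpha_n})$, so that $x_i=(-1)^{\beta_i}$. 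The condition $B_G\beta=0$ over $\Z_2$ means $\sum_{j\in e}\beta_j$ is even for every edge $e\in E(G)$, whence $\sum_{j\in e}\alpha_j=(m/2)\sum_{j\in e}\beta_j$ is a multiple of $m$; by Lemma \ref{L&sL}(i), $\x_\beta$ is a first Laplacian eigenvector, and since its entries lie in $\{1,-1\}$ it is an H-eigenvector. Hence $\x_\beta\in\HPV_0^\L(G)$ with $\Phi^{\rm H}(\x_\beta)=\beta$, establishing the bijection.

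The signless Laplacian case runs in exact parallel, invoking Lemma \ref{L&sL}(ii) in place of (i): here $B_G\beta=\mathbf{1}$ over $\Z_2$ says $\sum_{j\in e}\beta_j$ is odd, so $\sum_{j\in e}\alpha_j=(m/2)\sum_{j\in e}\beta_j$ equals $\sigma_e m+\frac{m}{2}$ for a suitable integer $\sigma_e$, matching (\ref{eqQ}), and the same $\pm1$-valued vector is then a first signless Laplacian H-eigenvector. This yields the bijection $\Psi^{\rm H}\colon\HPV_0^\Q(G)\to\HPS_0^\Q(G)$.

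There is no serious obstacle here; the argument is a direct $\Z_2$-analogue of Lemma \ref{bij}. The only points requiring care are confirming that the back-map genuinely produces eigenvectors rather than arbitrary $\pm1$-vectors---this is exactly where Lemma \ref{L&sL} is needed---and keeping track of the normalization $y_1=0$, which corresponds to the projective choice $x_1=1$. One may alternatively present $\Phi^{\rm H}$ as the restriction of the map $\Phi$ of Lemma \ref{bij} to the subset of H-eigenvectors, using the identification of $\{0,m/2\}\subset\Z_m$ with $\Z_2$; this makes the bijection a formal consequence of the earlier result, but the direct verification above is equally short.
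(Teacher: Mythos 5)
Your proposal is correct and follows essentially the same route as the paper's own proof: the same map $\x\mapsto\beta_{\x}$, injectivity by recovering $x_i=(-1)^{\beta_i}$, and surjectivity by constructing $\x_\beta$ from a solution $\beta$ and checking the edge conditions (the paper verifies $\L(G)\x_\beta^{m-1}=0$ directly where you invoke the ``if'' direction of Lemma \ref{L&sL}, an immaterial difference). No gaps.
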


\begin{proof}
For each $\x \in \HPV_0^\L(G)$,  $\x$ has the form as in (\ref{Hnorm}),
 i.e. $\x=((-1)^{\beta_1}, \ldots, (-1)^{\beta_n})$, where $\beta_1=0$ and $\beta_i \in \Z_2$ for $i \in [n]$.
  By Eq. (\ref{HeqL}), $\beta_{\x}:=(\beta_1,\ldots,\beta_n) \in \HPS_0^\L(G)$.
Define a map
\begin{align}\label{HmapL}
\Phi_H: \HPV_0^\L(G) \to \HPS_0^\L(G),  \x \mapsto \beta_{\x}.
\end{align}
Obviously, $\Phi_H$ is an injective map.
On the other hand, for each $\beta=(\beta_1,\ldots,\beta_n) \in H\PS_0^\L(G)$, as $B_G \beta=0$ over $\Z_2$, for each edge $e \in E(G)$,
\[ \sum_{j \in e} \beta_j \equiv 0 \mod 2.\]
Let $\x_{\beta}:=((-1)^{\beta_1}, \ldots, (-1)^{\beta_n})$.
Then it is easy to verify $\L(G) \x_{\beta}^{m-1}=0$, implying $\x_{\beta} \in \HPV_0^\L(G)$ and $\Phi_H(\x_{\beta})=\beta$.
So $\Phi_H$ is a bijection between $\HPV_0^\L(G)$ and $\HPS_0^\L(G)$.

Similarly, for each $\x \in \HPV_0^\Q(G)$,
 $\x=((-1)^{\beta_1}, \ldots, (-1)^{\beta_n})$, where $\beta_1=0$ and $\beta_i \in \Z_2$ for $i \in [n]$.
  By Eq. (\ref{HeqQ}), $\beta_{\x}:=(\beta_1,\ldots,\beta_n) \in \HPS_0^\Q(G)$.
 Define a map
\begin{align}\label{HmapQ}
\Psi: \HPV_0^\Q(G) \to \HPS_0^\Q(G),  \x \mapsto \beta_{\x}.
\end{align}
Then $\Psi$ is a bijection between $\HPV_0^\Q(G)$ and $\HPS_0^\Q(G)$.
\end{proof}

    Define a Hadamard product $\circ$ in $\HPV_0^\L(G)$.
    Then by Lemma \ref{Hbij} and a discussion similar to that prior to Corollary \ref{iso}, we get that $(\HPV_0^\L(G),\circ)$ is a  $\Z_2$-linear space.

\begin{cor}\label{Hiso}
Let $G$ be an $m$-uniform connected hypergraph on $n$ vertices.
Then there is an isomorphism between $\Z_2$-linear spaces $\HPV_0^\L(G)$ and $\HPS_0^\L(G)$.
\end{cor}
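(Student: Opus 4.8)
The plan is to promote the bijection $\Phi_H\colon \HPV_0^\L(G)\to \HPS_0^\L(G)$ constructed in Lemma \ref{Hbij} to a $\Z_2$-linear isomorphism, following the template used just before Corollary \ref{iso} for the Laplacian case over $\Z_m$. Since $\HPS_0^\L(G)$ is already a $\Z_2$-linear space (being the solution set of the homogeneous system $B_G\y=0$ with $y_1=0$ over the field $\Z_2$), it suffices to transport this structure through $\Phi_H$ and check that $\Phi_H$ respects it. Thus I would first equip $\HPV_0^\L(G)$ with the Hadamard product $\circ$, observe that $\mathbf{1}$ is the identity, and note that every element is its own inverse because $(\x\circ\x)_i = x_i^2 = 1$ for $x_i\in\{1,-1\}$; this already forces $(\HPV_0^\L(G),\circ)$ to be an elementary abelian $2$-group once closure is verified.

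The key computation is that $\Phi_H$ turns the Hadamard product into addition in $\Z_2^n$. Writing $\x=((-1)^{\beta_1},\ldots,(-1)^{\beta_n})$ and $\y=((-1)^{\gamma_1},\ldots,(-1)^{\gamma_n})$ with $\beta,\gamma\in\HPS_0^\L(G)$, one has $(\x\circ\y)_i=(-1)^{\beta_i}(-1)^{\gamma_i}=(-1)^{\beta_i+\gamma_i}$, so the exponent vector of $\x\circ\y$ is $\beta+\gamma$ read modulo $2$. For closure I would check $\x\circ\y\in\HPV_0^\L(G)$: its entries lie in $\{1,-1\}$, and since $B_G(\beta+\gamma)=B_G\beta+B_G\gamma=0$ over $\Z_2$ with first coordinate $0$, the vector $\beta+\gamma$ lies in $\HPS_0^\L(G)$; by Lemma \ref{Hbij} the associated H-eigenvector is exactly $\x\circ\y$. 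This simultaneously establishes closure and the identity $\Phi_H(\x\circ\y)=\beta+\gamma=\Phi_H(\x)+\Phi_H(\y)$, i.e. $\Phi_H$ is an additive homomorphism.

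It remains to match the $\Z_2$-scalar action. Because $\Z_2=\{0,1\}$, scalar multiplication carries no information beyond the additive structure: $1\cdot\x=\x$ and $0\cdot\x=\mathbf{1}$ correspond under $\Phi_H$ to $1\cdot\beta=\beta$ and $0\cdot\beta=\mathbf{0}$, both of which hold trivially. Combining the additive homomorphism property with the bijectivity already supplied by Lemma \ref{Hbij} yields that $\Phi_H$ is an isomorphism of $\Z_2$-linear spaces, completing the proof.

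I do not anticipate a genuine obstacle here: unlike the Laplacian case over $\Z_m$, there is no subtlety coming from a composite modulus, and every element has order dividing $2$ automatically, so the linear-space structure is forced. The only point demanding any care is the closure check—confirming that the Hadamard product of two H-eigenvectors is again an H-eigenvector—but this is immediate from the real $\pm 1$ entries and the $\Z_2$-linearity of $B_G(\cdot)$ via Lemma \ref{Hbij}.
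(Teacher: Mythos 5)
Your proof is correct and follows essentially the same route as the paper: the paper also equips $\HPV_0^\L(G)$ with the Hadamard product $\circ$ and uses the bijection $\Phi_H$ of Lemma \ref{Hbij} (mirroring the discussion before Corollary \ref{iso}) to transport the $\Z_2$-linear structure, making $\Phi_H$ an isomorphism. You merely spell out the closure check and the identity $\Phi_H(\x\circ\y)=\Phi_H(\x)+\Phi_H(\y)$ that the paper leaves implicit.
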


\begin{thm}\label{HnoL}
Let $G$ be an $m$-uniform connected hypergraph on $n$ vertices, where $m$ is even.
Suppose the incidence matrix $B_G$ has rank $\bar{r}$ over $\Z_2$.
Then $1 \le \bar r \le n-1$, and $\HPV_0^\L(G)$ is a $\Z_2$-linear spaces of dimension $n-1-\bar{r}$.
Consequently, $G$ has $2^{n-1-\bar{r}}$ first Laplacian H-eigenvectors.
\end{thm}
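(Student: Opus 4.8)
The plan is to reduce everything to a single dimension count for the $\Z_2$-linear space $\HPS_0^\L(G)$, since Corollary \ref{Hiso} already furnishes an isomorphism $\HPV_0^\L(G)\cong\HPS_0^\L(G)$ of $\Z_2$-linear spaces. Once I establish that $\dim_{\Z_2}\HPS_0^\L(G)=n-1-\bar r$, the cardinality of $\HPV_0^\L(G)$, which counts the first Laplacian H-eigenvectors in projective space, is immediately $2^{n-1-\bar r}$. This mirrors the strategy of Theorem \ref{noL}, but now the ground ring $\Z_2$ is a field, so the Smith normal form collapses to ordinary rank and the whole argument becomes pure linear algebra.

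First I would pass to the full solution space $K=\{\y\in\Z_2^n: B_G\y=0\}$, the kernel of $B_G$ acting on $\Z_2^n$. By the rank--nullity theorem over the field $\Z_2$, $\dim_{\Z_2}K=n-\bar r$. The key structural input is that $\mathbf{1}\in K$: since $m$ is even, each row of $B_G$ sums to $m\equiv 0\pmod 2$, whence $B_G\mathbf{1}=0$ over $\Z_2$. Now $\HPS_0^\L(G)$ is exactly the subspace of $K$ cut out by the single normalizing condition $y_1=0$. I would encode this through the coordinate functional $\ell\colon K\to\Z_2$, $\y\mapsto y_1$, whose kernel is $\HPS_0^\L(G)$. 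Because $\ell(\mathbf{1})=1$, the functional $\ell$ is surjective, hence of rank $1$, and rank--nullity applied inside $K$ yields $\dim_{\Z_2}\HPS_0^\L(G)=\dim_{\Z_2}K-1=n-1-\bar r$.

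For the bounds, $\bar r\ge 1$ holds because $G$, being connected with at least one edge, has $B_G\ne 0$, and a nonzero matrix over $\Z_2$ has rank at least $1$; meanwhile $\bar r\le n-1$ follows from $\mathbf{1}\in K$, which forces $\dim_{\Z_2}K\ge 1$, that is, $n-\bar r\ge 1$. Combining the isomorphism of Corollary \ref{Hiso} with this dimension computation, $\HPV_0^\L(G)$ is a $\Z_2$-linear space of dimension $n-1-\bar r$ and therefore has $2^{n-1-\bar r}$ elements, giving the asserted number of first Laplacian H-eigenvectors. The one point I would flag as the crux is that imposing $y_1=0$ must drop the dimension by \emph{exactly} one rather than zero; this is guaranteed precisely by $\mathbf{1}\in K$ having a nonzero first coordinate, which is where the evenness of $m$ enters decisively. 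Everything else is routine.
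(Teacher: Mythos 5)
Your proof is correct and takes essentially the same route as the paper: both reduce via Corollary \ref{Hiso} to computing $\dim_{\Z_2}\HPS_0^\L(G)$, use rank--nullity to get $n-\bar{r}$ for the full kernel $K=\{\y\in\Z_2^n: B_G\y=0\}$, and use the evenness of $m$ (so $B_G\mathbf{1}=0$ over $\Z_2$) to show the normalization $y_1=0$ drops the dimension by exactly one. The only cosmetic difference is that the paper identifies $\HPS_0^\L(G)$ with the quotient $K/(\Z_2\mathbf{1})$, whereas you realize it as the kernel of the surjective coordinate functional $\y\mapsto y_1$ on $K$ --- two equivalent formulations of the same dimension count.
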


\begin{proof}
Let ${\rm H}\S_0^\L(G)=\{\y \in \Z_2^n: B_G\y=0\}$.
Then ${\rm H}\S_0^\L(G)$ is a $\Z_2$-linear space of dimension $n - \bar{r}$.
As $m$ is even, $B_G \mathbf{1}=0$ over $\Z_2$.
So, $\HPS_0^\L(G)$ is isomorphic to ${\rm H}\S_0^\L(G)/(\Z_1 \mathbf{1})$, which is a $\Z_2$-linear space of dimension $n - 1-\bar{r}$.
The result follows by Corollary \ref{Hiso}.
\end{proof}

\begin{cor}\label{HnoS}
Let $G$ be an $m$-uniform connected hypergraph on $n$ vertices, where $m$ is even.
Suppose that $\Q(G)$ has a zero $H$-eigenvalue, and the incidence matrix $B_G$ has  rank $\bar{r}$  over $\Z_2$.
Then $G$ has $2^{n-1-\bar{r}}$ first signless Laplacian H-eigenvectors, which is equal to the number of first Laplacian H-eigenvectors.
\end{cor}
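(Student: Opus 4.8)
The plan is to follow the template of Corollary~\ref{noS}, now carried out over the field $\Z_2$ rather than the ring $\Z_m$, so that the modules become $\Z_2$-linear (respectively affine) spaces and cardinalities are powers of $2$. The whole argument reduces the counting of first signless Laplacian H-eigenvectors to the counting of first Laplacian H-eigenvectors, which is already settled by Theorem~\ref{HnoL}.

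First I would use the hypothesis to guarantee nonemptiness. Since $\Q(G)$ has a zero H-eigenvalue, there is an H-eigenvector of $\Q(G)$ associated with $0$, so $\HPV_0^\Q(G)\neq\varnothing$; by the bijection of Lemma~\ref{Hbij} between $\HPV_0^\Q(G)$ and $\HPS_0^\Q(G)$, the set $\HPS_0^\Q(G)$ is nonempty as well. I would then fix a particular solution $\bar\y\in\HPS_0^\Q(G)$, i.e. $\bar\y\in\Z_2^n$ with $B_G\bar\y=\mathbf{1}$ and $\bar y_1=0$.

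Next I would invoke the affine structure recorded just before Lemma~\ref{Hbij}: with $\bar\y$ fixed as above, $\HPS_0^\Q(G)=\bar\y+\HPS_0^\L(G)$. The translation map $\y\mapsto\bar\y+\y$ is a bijection from the linear space $\HPS_0^\L(G)$ onto the affine space $\HPS_0^\Q(G)$ (it is injective because addition is cancellative in $\Z_2^n$, and surjective precisely because every element of $\HPS_0^\Q(G)$ differs from $\bar\y$ by an element of $\HPS_0^\L(G)$, since $B_G(\y-\bar\y)=\mathbf{1}-\mathbf{1}=0$). Hence $\abs{\HPS_0^\Q(G)}=\abs{\HPS_0^\L(G)}$. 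Combining this with the two bijections of Lemma~\ref{Hbij} gives $\abs{\HPV_0^\Q(G)}=\abs{\HPV_0^\L(G)}$, and Theorem~\ref{HnoL} evaluates the right-hand side as $2^{\,n-1-\bar r}$. This yields both assertions of the corollary simultaneously: the number of first signless Laplacian H-eigenvectors is $2^{\,n-1-\bar r}$, equal to the number of first Laplacian H-eigenvectors.

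I do not expect a genuine obstacle here, as the statement is a formal consequence of the machinery already assembled; the argument is essentially the mod-$2$ shadow of Corollary~\ref{noS}. The only point requiring care is that the equality of counts relies on $\HPS_0^\Q(G)$ being nonempty, which is exactly where the hypothesis that $\Q(G)$ has a zero H-eigenvalue is consumed (equivalently, by Theorem~\ref{char-odd}, that $G$ is odd-colorable). Without it the affine space $\HPS_0^\Q(G)$ could be empty, and the translation bijection would be vacuous, so I would be sure to flag that this hypothesis is what makes the equality of cardinalities meaningful.
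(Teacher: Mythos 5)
Your proof is correct and takes essentially the same approach as the paper's: nonemptiness of $\HPS_0^\Q(G)$ via Lemma~\ref{Hbij}, the translation bijection $\HPS_0^\Q(G)=\bar\y+\HPS_0^\L(G)$, and Theorem~\ref{HnoL} to evaluate the common cardinality as $2^{n-1-\bar r}$. One small slip in your closing aside: the hypothesis that $0$ is an H-eigenvalue of $\Q(G)$ is equivalent, by Theorem~\ref{Hchar-odd}, to $G$ being odd-bipartite (odd-transversal), not merely odd-colorable --- the latter, via Theorem~\ref{char-odd}, characterizes $0$ being an eigenvalue of $\Q(G)$, which may be only an N-eigenvalue.
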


\begin{proof}
If $\Q(G)$ has a zero H-eigenvalue, then $\HPV_0^Q(G)$ and hence $\HPS_0^Q(G)$ is nonempty by Lemma \ref{Hbij}.
Let $\bar\y \in \HPS_0^Q(G)$.
By the discussion prior to Lemma \ref{Hbij}, $\HPS_0^\Q(G)=\bar\y+\HPS_0^\L(G)$.
The result follows from Theorem \ref{HnoL}.
\end{proof}

Finally we discuss when $\Q(G)$ has a zero H-eigenvalue. In fact, the characterization was given by Hu and Qi \cite[Proposition 5.1]{HQ}, Shao et al. \cite[Theorem 2.5]{SSW}.
We need the following notions: odd (even) traversal and odd (even) bipartition.
Let $G$ be hypergraph.
A set $U$ of vertices of $G$ is called an \emph{odd (even) transversal} if every edge of $G$ intersects $U$ in an odd (even) number of vertices \cite{CH,RS,Ni}.
$G$ is called \emph{odd (even)-traversal} if $G$ has an odd (even) transversal.
Suppose further $G$ is an $m$-uniform hypergraph, where $m$ is even.
Then the odd (even) transversal has another statement.
An \emph{odd (even) bipartition} $\{V_1,V_2\}$ of $G$ is a bipartition of $V(G)$
such that each edge of $G$ intersects $V_1$ or $V_2$ in an odd (even) number of vertices.
$G$ is called \emph{odd (even)-bipartite} if $G$ has an odd (even) bipartition \cite{HQ}.
Here we allow a trivial case on the even bipartitions, that is, $V_1$ or $V_2$ may be $V(G)$, which is not included in definition given in \cite{HQ}.

We have the following equivalent characterizations on a uniform connected hypergraph having a zero H-eigenvalue
by combining the results in \cite{HQ,SSW,Ni} or considering Theorem \ref{char-odd} in a special case.

\begin{thm}\label{Hchar-odd}
Let $G$ be an $m$-uniform connected hypergraph on $n$ vertices.
Then the following are equivalent.

\begin{enumerate}

\item $m$ is even, and $G$ is odd-bipartite or odd-transversal.

\item $0$ is an H-eigenvalue of $\Q(G)$.

\item The linear equation $B_G\y=\mathbf{1}$ has a solution over $\Z_2$.

\item $\Q(G)=D^{-(m-1)}\L(G)D$ for some diagonal matrix $D$ with $\pm 1$ on the diagonal.

\item $\HSpec(\L(G))=\HSpec(\Q(G))$.

\item $\rho(\L(G))=\rho(\Q(G))$, and $\rho(\Q(G))$ is an H-eigenvalue of $\L(G)$.

\item $\A(G)=-D^{-(m-1)}\A(G)D$ for some diagonal matrix $D$ with $\pm 1$ on the diagonal.

\item $\HSpec(\A(G))=-\HSpec(\A(G))$.

\item $-\rho(\A(G))$ is an H-eigenvalue of $\A(G)$.
\end{enumerate}
\end{thm}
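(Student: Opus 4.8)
The plan is to read this statement as the H-eigenvalue (real) refinement of Theorem \ref{char-odd} and to recycle the structure of that proof with three systematic substitutions: the full spectrum $\Spec$ is replaced by the H-spectrum $\HSpec$, the unimodular diagonal matrices ($|D|=\I$) are replaced by real diagonal matrices with $\pm1$ on the diagonal, and the coefficient ring $\Z_m$ is replaced by $\Z_2$. First I would restrict to $m$ even (the standing assumption of this section, made explicit in statement $(1)$), the case $m$ odd being governed by Theorem \ref{modd}. I would then organize the nine statements into the blocks $\{(1),(2),(3)\}$, $\{(4),(5),(6)\}$ and $\{(7),(8),(9)\}$, prove each block internally equivalent, and finally glue the blocks together.

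For the first block, $(2)\Leftrightarrow(3)$ is immediate from Lemma \ref{Hbij}: $0$ is an H-eigenvalue of $\Q(G)$ exactly when $\HPV_0^\Q(G)$, and hence $\HPS_0^\Q(G)$, is nonempty, i.e.\ when $B_G\y=\mathbf{1}$ is solvable over $\Z_2$ (normalizing $y_1=0$ via $B_G\mathbf{1}=0$). Then $(1)\Leftrightarrow(3)$ is a translation of definitions: a solution $\beta$ of $B_G\beta=\mathbf{1}$ has support meeting every edge in an odd number of vertices, i.e.\ it is an odd transversal, equivalently an odd bipartition. For the second and third blocks, the forward implications $(4)\Rightarrow(5)$ and $(7)\Rightarrow(8)$ rest on the observation that a diagonal similarity through a real $\pm1$ matrix sends real eigenvectors to real eigenvectors, hence preserves the H-spectrum (this realness is the key point beyond Shao's invariance of the full spectrum): from $(4)$ we get $\HSpec(\L(G))=\HSpec(\Q(G))$, and from $(7)$, since $-\A(G)$ is then diagonally similar to $\A(G)$, we get $\HSpec(\A(G))=\HSpec(-\A(G))=-\HSpec(\A(G))$. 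Next, $(5)\Rightarrow(6)$ and $(8)\Rightarrow(9)$ use that $\rho(\Q(G))$ and $\rho(\A(G))$ are themselves H-eigenvalues (Perron--Frobenius for the nonnegative weakly irreducible tensors $\Q(G)$ and $\A(G)$), together with $|\L(G)|=\Q(G)$ and Theorem \ref{PF2}(1), to pin down $\rho(\L(G))=\rho(\Q(G))$. Finally $(4)\Leftrightarrow(7)$ is the identity computation from (\ref{product}), using that the diagonal tensor $\D(G)$ is fixed by any diagonal similarity.

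To glue the blocks together I would prove $(3)\Leftrightarrow(7)$ directly. Given a solution $\beta\in\Z_2^n$ of $B_G\beta=\mathbf{1}$, set $D=\diag((-1)^{\beta_1},\ldots,(-1)^{\beta_n})$; then for an edge $e=\{i_1,\ldots,i_m\}$ the scalar attached to $a_{i_1\cdots i_m}$ in $D^{-(m-1)}\A(G)D$ is $(-1)^{\beta_{i_1}(m-1)+\beta_{i_2}+\cdots+\beta_{i_m}}$, which equals $(-1)^{\sum_{j\in e}\beta_j}=-1$ precisely because $m$ is even (so $m-1$ is odd and the exponent reduces to the symmetric sum $\sum_{j\in e}\beta_j\equiv 1$). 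Hence $\A(G)=-D^{-(m-1)}\A(G)D$, which is $(7)$; reversing the computation recovers $(3)$. Alternatively one can cite the symmetric-spectrum criterion of Nikiforov to obtain $(1)\Leftrightarrow(8)$ as the connecting link.

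The two substantive implications, and where I expect the real work, are $(6)\Rightarrow(4)$ and $(9)\Rightarrow(7)$, both via the Perron--Frobenius comparison Theorem \ref{PF2}(2). For $(6)\Rightarrow(4)$ I would take the weakly irreducible tensor $\Q(G)$ with $|\L(G)|=\Q(G)$ and the eigenvalue $\lambda=\rho(\Q(G))$ of $\L(G)$; the crucial extra hypothesis in $(6)$, namely that $\rho(\Q(G))$ is an \emph{H}-eigenvalue of $\L(G)$, guarantees a real eigenvector $\y$, so the phase is $\theta=0$ and the matrix $D=\diag(y_i/|y_i|)$ produced by Theorem \ref{PF2}(2) has entries $\pm1$, yielding $\Q(G)=D^{-(m-1)}\L(G)D$. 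For $(9)\Rightarrow(7)$ I would instead take $\B=\A=\A(G)$ and the real eigenvalue $-\rho(\A(G))=\rho(\A(G))e^{\i\pi}$, so $\theta=\pi$, $e^{-\i\theta}=-1$, and the same theorem produces $\A(G)=-D^{-(m-1)}\A(G)D$ with $D$ again real and $\pm1$. The delicate point throughout is controlling the phase $\theta$ and thereby the realness of $D$: this is exactly why $(6)$ must be strengthened relative to its analogue in Theorem \ref{char-odd} to assert that $\rho(\Q(G))$ is an H-eigenvalue of $\L(G)$, and it is the only place where the H-refinement genuinely departs from the full-spectrum version.
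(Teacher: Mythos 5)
Your proof is correct, and it is in fact more of a proof than the paper itself provides: Theorem \ref{Hchar-odd} appears in the paper with no proof at all, only the remark that it follows ``by combining the results in \cite{HQ,SSW,Ni} or considering Theorem \ref{char-odd} in a special case.'' Your proposal carries out the second route in full, and its skeleton mirrors the paper's proof of Theorem \ref{char-odd}: the same three blocks, the same cycles $(4)\Rightarrow(5)\Rightarrow(6)\Rightarrow(4)$ and $(7)\Rightarrow(8)\Rightarrow(9)\Rightarrow(7)$, Theorem \ref{PF2}(2) for the two hard implications, and Lemma \ref{Hbij} playing the role that Lemma \ref{bij} plays there. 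The genuine difference is the glue between the blocks $\{(1),(2),(3)\}$ and $\{(7),(8),(9)\}$: for Theorem \ref{char-odd} the paper imports $(1)\Leftrightarrow(8)$ from Corollary 19 of \cite{Ni}, a statement about odd-colorability and symmetry of the \emph{full} spectrum, whose H-spectral analogue is not that corollary but material from \cite{SSW,HQ}; your direct computation $(3)\Leftrightarrow(7)$ --- the sign attached to each edge entry of $D^{-(m-1)}\A(G)D$ is $(-1)^{\sum_{j\in e}\beta_j}=-1$ precisely because $m-1$ is odd --- replaces that citation and makes the argument self-contained, which is a real gain. You also correctly isolate the two points where the H-refinement is not automatic: that a $\pm 1$ diagonal similarity preserves the H-spectrum (realness of eigenvectors, beyond Shao's invariance of the full spectrum), and that the strengthened clause in $(6)$ is exactly what forces $\theta=0$ and a real matrix $D$ out of Theorem \ref{PF2}(2). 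Two minor caveats: your steps $(5)\Rightarrow(6)$ and $(8)\Rightarrow(9)$ use that the spectral radius of a nonnegative weakly irreducible tensor is an H-eigenvalue, which is standard Perron--Frobenius (\cite{FGH,YY3}) but is not contained in the excerpt quoted as Theorem \ref{PF2}, so it deserves an explicit citation; and the restriction to even $m$ is not merely convenient but necessary, since for odd $m$ the vector $\mathbf{1}$ satisfies $B_G\mathbf{1}=\mathbf{1}$ over $\Z_2$, so $(3)$ would hold while $(2)$ fails by Theorem \ref{modd} --- the theorem is only true under the section's standing assumption that $m$ is even, which your reading makes explicit.
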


By Theorem \ref{char-odd} and Theorem \ref{Hchar-odd}, we know the difference between odd-colorable and odd-bipartite (or odd-transversal).
An odd-bipartite hypergraph is odd-colorable, but the converse does not hold.
Nikiforov \cite{Ni} proved that if $m \equiv 2 \mod 4$, then an $m$-uniform odd-colorable hypergraph is odd-bipartite.
For $m \equiv 0 \mod 4$, Nikiforov \cite{Ni} constructed two families of $m$-uniform odd-colorable hypergraphs which are not odd-bipartite.

In the paper \cite{KF}, the authors construct a family of $m$-uniform hypergraphs from simple graphs,
called the {\it generalized power hypergraph} $G^{m,m/2}$, which is obtained from a simple graph
$G$ by blowing up each vertex into an $m/2$-set and preserving the adjacency relation, where $m$ is even.
It was shown that  $G^{m,m/2}$ is odd-bipartite if and only if $G$ is bipartite \cite{KF}.
Suppose now that $G$ is connected and non-bipartite.
It was shown that $\rho(\L(G^{m,m/2}))=\rho(\Q(G^{m,m/2}))$ if and only if $m \equiv 0 \mod 4$ \cite{FKT}.
So, by Theorem \ref{char-odd}, $G^{m,m/2}$ is odd-colorable but is not odd-bipartite if $m \equiv 0 \mod 4$, and $G^{m,m/2}$ is not odd-colorable otherwise.

In the paper \cite{HQ}, the authors counted the number of first Laplacian or signless Laplacian H-eigenvectors of
a general uniform (not necessarily connected) hypergraph by means of the odd (even)-bipartite connected components.
They also discuss the number of first Laplacian or signless Laplacian N-eigenvectors for $3$, $4$ or $5$-uniform hypergraphs by means of some kinds partitions.
However, it is not easy to determine the number of even (odd)-bipartite connected components or the partitions as described in the paper.

We revisit this problem by means of incidence matrix of a hypergraph.
Suppose $G$ is an $m$-uniform hypergraph. If $G$ is disconnected, it suffices to consider its connected components.
If $m$ is odd and $G$ is connected, by Theorem \ref{modd}, $G$ has the $\mathbf{1}$ as the only first Laplacian eigenvector, and has no first signless Laplacian eigenvectors.
So we assume $G$ is an $m$-uniform connected hypergraph with vertex set $[n]$, where $m$ is even.
By Lemma \ref{Hbij}, the number of first Laplacian (signless Laplacian) H-eigenvectors of $G$ is exactly that of
the solutions to the equation $B_G \y = 0$ ($B_G \y = \mathbf{1}$) over $\Z_2$ with $y_1=0$.
Each solution $\y$ to $B_G \y = 0$ ($B_G \y = \mathbf{1}$) over $\Z_2$  with $y_1=0$ determines uniquely an even (odd) bipartition $\{V_0,V_1\}$ of $G$, where
$$ V_i=\{v: y_v=i\}, i \in \{0,1\}.$$
On the other hand, each even (odd) bipartition $\{V_0,V_1\}$ of $G$ by fixing $1 \in V_0$
   gives uniquely a solution $\y$ to $B_G \y = 0$ ($B_G \y = \mathbf{1}$) over $\Z_2$ with $y_1=0$, by setting $y_v=i$ if $v \in V_i$ for $i \in \{0,1\}$.
So we get following result on the number of even (odd) bipartitions of $G$ by the above discussion,
 Theorem \ref{HnoL} and Corollary \ref{HnoS}

\begin{thm}\label{bipa}
Let $G$ be an $m$-uniform connected hypergraph, where $m$ is even.
Then the number of even (odd) bipartitions of $G$ is exactly the number of solutions to $B_G \y = 0$ ($B_G \y = \mathbf{1}$) over $\Z_2$ with $y_1=0$, which is equal to the number of first Laplacian (signless Laplacian) H-eigenvectors of $G$.

The number of even bipartitions of $G$ is $1$ (corresponding to the trivial bipartition) or a power of $2$,
which is equal to the number of odd bipartitions of $G$ if $G$ has odd bipartitions.
\end{thm}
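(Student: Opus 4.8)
The plan is to assemble the correspondence already sketched before the statement into a short chain of bijections and then read off the count from Theorem~\ref{HnoL} and Corollary~\ref{HnoS}; no new machinery is needed beyond careful bookkeeping, using throughout that $m$ is even.

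First I would make the passage between bipartitions and $\Z_2$-solutions precise. Given $\y \in \Z_2^n$ with $y_1=0$, set $V_i=\{v: y_v=i\}$ for $i \in \{0,1\}$. The $e$-th entry of $B_G\y$ is $\sum_{v \in e} y_v \equiv |e \cap V_1| \bmod 2$, so $B_G\y=0$ says precisely that $|e\cap V_1|$ is even for every edge $e$; since $|e|=m$ is even, $|e\cap V_0|=m-|e\cap V_1|$ is then even as well, so $\{V_0,V_1\}$ is an even bipartition (in the extended sense allowing $V_1=\emptyset$, which is exactly the case $\y=0$). Replacing $B_G\y=0$ by $B_G\y=\mathbf{1}$ forces $|e\cap V_1|$, and hence $|e\cap V_0|$, to be odd for every edge, yielding an odd bipartition.

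Second I would check that these assignments are bijections onto the even, respectively odd, bipartitions. The normalization $y_1=0$ is what makes this clean: an unordered bipartition $\{V_0,V_1\}$ admits two labelings, but demanding $1 \in V_0$ (equivalently $y_1=0$) selects exactly one of them, since $1$ lies in a single part. Hence distinct normalized solutions produce distinct bipartitions, and every bipartition arises from a unique normalized solution by labeling so that $1 \in V_0$ and setting $y_v=i$ when $v \in V_i$. Combining this with Lemma~\ref{Hbij}, which identifies $\HPS_0^\L(G)$ and $\HPS_0^\Q(G)$ with the first Laplacian and first signless Laplacian H-eigenvector sets $\HPV_0^\L(G)$ and $\HPV_0^\Q(G)$, gives the three-way equality of the first assertion.

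For the second paragraph I would invoke Theorem~\ref{HnoL}: $\HPV_0^\L(G)$ is a $\Z_2$-space of dimension $n-1-\bar{r}$ with $0 \le n-1-\bar{r}$, so the number of even bipartitions is $2^{\,n-1-\bar{r}}$, a power of $2$; it equals $1$ exactly when $\bar{r}=n-1$, and then the unique solution is $\y=0$, i.e. the trivial bipartition. If $G$ has an odd bipartition, then $\HPS_0^\Q(G)$ is nonempty, hence a coset $\bar\y+\HPS_0^\L(G)$ of the same cardinality as $\HPS_0^\L(G)$; by Corollary~\ref{HnoS} the number of odd bipartitions therefore equals the number of even bipartitions. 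I expect the sole delicate point to be the second step — confirming that fixing $1 \in V_0$ precisely cancels the two-to-one labeling of unordered bipartitions, so that it is bipartitions rather than their labelings that are counted by the normalized solutions. Everything else is a direct translation of the $\Z_2$ linear algebra already established.
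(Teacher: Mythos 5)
Your proposal is correct and follows essentially the same route as the paper: the paper's own argument is exactly the discussion preceding the theorem, namely the bijection between normalized $\Z_2$-solutions (fixing $1\in V_0$, i.e.\ $y_1=0$) and even/odd bipartitions, combined with Lemma \ref{Hbij}, Theorem \ref{HnoL} and Corollary \ref{HnoS}. Your write-up merely makes explicit a few points the paper leaves implicit (that $|e\cap V_0|$ inherits the parity of $|e\cap V_1|$ because $m$ is even, and that the normalization cancels the two-to-one labeling of unordered bipartitions), which are correct and harmless additions.
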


The number of first Laplacian (signless Laplacian) N-eigenvectors of a connected hypergraph $G$ is the number of first Laplacian (signless Laplacian) eigenvectors minus the number of first Laplacian (signless Laplacian) H-eigenvectors, which can be obtained explicitly by Lemma \ref{modd}, Theorem \ref{noL} and Theorem \ref{HnoL} (Corollary \ref{noS} and Corollary \ref{HnoS}).

\begin{thm}\label{last}
Let $G$ be an $m$-uniform connected hypergraph, and let $B_G$ has a Smith normal form over $\Z_m$ as in (\ref{smith}).

\begin{enumerate}
\item  If $m$ is odd, the number of first Laplacian (or first signless Laplacian) N-eigenvectors of $G$ is
$m^{n-1-r} \Pi_{i=1}^r d_i-1$ (or $0$).

\item  If $m$ is even, the number of first Laplacian N-eigenvectors of $G$ is
$$m^{n-1-r} \Pi_{i=1}^r d_i-2^{n-1-\bar{r}},$$
which is equal to the number of first signless Laplacian N-eigenvectors of $G$ if zero is an H-eigenvalue of $\Q(G)$,
where $\bar{r}$ is the rank of $B_G$ over $\Z_2$.

\item  If $m$ is even, the number of first signless Laplacian N-eigenvectors of $G$ is
$$m^{n-1-r} \Pi_{i=1}^r d_i,$$
if zero is an N-eigenvalue of $\Q(G)$.
\end{enumerate}
\end{thm}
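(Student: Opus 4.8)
The plan is to apply the elementary accounting identity recorded just before the theorem: for each of the Laplacian and signless Laplacian, the number of first N-eigenvectors equals the total number of first eigenvectors minus the number of first H-eigenvectors. Since every term on the right has already been determined in the preceding results, the entire argument reduces to selecting the correct count in each case and subtracting. The one point I would track carefully throughout is the status of zero as an eigenvalue of $\Q(G)$---whether it is an eigenvalue at all, and if so whether it is an H- or an N-eigenvalue---because this governs which signless Laplacian counts are available; here I would lean on Corollary \ref{0-odd} and Theorem \ref{Hchar-odd}.

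For part (1) I would take $m$ odd. Theorem \ref{noL} supplies the total Laplacian count $m^{n-1-r}\Pi_{i=1}^r d_i$, and Theorem \ref{modd} says $\mathbf{1}$ is the unique first Laplacian H-eigenvector, so subtracting $1$ gives the stated N-eigenvector count. For the signless Laplacian I would invoke Corollary \ref{0-odd}: with $m$ odd, $\Q(G)$ has no zero eigenvalue, so $\PV_0^\Q(G)=\emptyset$ and there are no first signless Laplacian eigenvectors whatsoever, forcing the N-count to be $0$.

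For part (2) I would take $m$ even. Theorem \ref{noL} again gives the total $m^{n-1-r}\Pi_{i=1}^r d_i$, Theorem \ref{HnoL} gives $2^{n-1-\bar{r}}$ first Laplacian H-eigenvectors, and the difference is the asserted formula. If moreover zero is an H-eigenvalue of $\Q(G)$, then zero is in particular an eigenvalue, so Corollary \ref{noS} yields the identical total $m^{n-1-r}\Pi_{i=1}^r d_i$ for the signless Laplacian and Corollary \ref{HnoS} yields the identical H-count $2^{n-1-\bar{r}}$; subtracting gives the same N-eigenvector count, establishing the claimed equality.

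For part (3) I would take $m$ even with zero an N-eigenvalue of $\Q(G)$. As zero is still an eigenvalue, Corollary \ref{noS} applies and the total number of first signless Laplacian eigenvectors is $m^{n-1-r}\Pi_{i=1}^r d_i$; and since an N-eigenvalue admits no H-eigenvectors by definition, there are no first signless Laplacian H-eigenvectors, so the entire total consists of N-eigenvectors. By Theorem \ref{Hchar-odd} this is precisely the regime where $G$ is odd-colorable but not odd-bipartite, which is what separates part (3) from part (2). The computation itself is routine; the only genuine obstacle, and the step I would phrase most carefully, is the case analysis on zero as an eigenvalue of $\Q(G)$, so that Corollary \ref{noS}---whose hypothesis is that zero be an eigenvalue---is used only when that hypothesis is in force.
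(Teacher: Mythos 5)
Your proposal is correct and follows essentially the same route as the paper, which presents Theorem \ref{last} as an immediate consequence of the subtraction identity (N-count equals total count minus H-count) combined with Theorem \ref{modd}, Theorem \ref{noL}, Theorem \ref{HnoL}, Corollary \ref{noS}, Corollary \ref{HnoS}, and Corollary \ref{0-odd}. Your careful tracking of when zero is an eigenvalue, an H-eigenvalue, or an N-eigenvalue of $\Q(G)$ is exactly the case analysis the paper relies on implicitly.
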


Note that if $B_G$ has a Smith normal form over $\Z_m$ as in (\ref{smith}) and $m$ is even,
then an invertible element $t$ of $\Z_m$ is coprime to $m$, which is necessarily an odd number as $m$ is even.
So, such $t$ is also invertible in $\Z_2$, and all elementary transformations over $\Z_m$ are also  elementary transformations over $\Z_2$.
So the Smith normal form of $B_G$ over $\Z_2$ is the Smith normal form (\ref{smith}) modulo $2$, and hence the rank of $B_G$  over $\Z_2$ is exactly the number of odd invariant divisors of $B_G$ over $\Z_m$.

Finally we will give some examples of Theorem \ref{last}.

\begin{exm}
Let $G$ be a complete $m$-uniform hypergraph on $n$ vertices, where $n \ge m+1$.
For any two vertices $i \ne j$, taking an arbitrary $(m-1)$-set $U$ of $G$ which does not contain $i$ or $j$,
by considering the equation $B_G\y =0$ on two edges $U \cup \{i\}$ and $U \cup \{j\}$,
we have $y_i=y_j$.
So the equation only has the solutions $\alpha \mathbf{1}$ for some $\alpha \in \Z_m$.
By Lemma \ref{bij}, $G$ has only one first Laplacian eigenvector associated with the zero eigenvalue, i.e. the H-eigenvector $\mathbf{1}$.

If $m$ is even, consider the equation $B_G\y =\frac{m}{2}\mathbf{1}$ over $\Z_m$.
By a similar discussion, we have $\y=\alpha \mathbf{1}$ for some $\alpha \in \Z_m$, which implies that the above equation has no solutions.
So, also by Lemma \ref{bij}, $G$ has no zero signless Laplacian eigenvalue, which implies that $G$ is not odd-colorable by Theorem \ref{char-odd}.
\end{exm}

A \emph{cored hypergraph} \cite{HQS} is one such that each edge contains a vertex of degree one.
It is easily seen that a cored hypergraph of even uniformity is odd-bipartite.

\begin{exm}
Let $G$ be a connected $m$-uniform cored hypergraph on $n$ vertices with $t$ edges $e_1,e_2,\ldots,e_t$.
Choose one vertex $i$ from each edge $e_i$ for $i \in [t]$.
Then the incidence matrix $B_G$ contains an $t \times t$ identity submatrix, implying that $B_G$ has $t$ invariant divisors all being $1$ over $\Z_m$ or $\Z_2$.
So, by Theorems \ref{noL} and \ref{HnoL}, $G$ has $m^{n-1-t}$ first Laplacian eigenvectors and $2^{n-1-t}$ first Laplacian H-eigenvectors.

Suppose that $m$ is even.
Then $G$ is odd-bipartite, and zero is an H-eigenvalue of signless Laplacian of $G$ by Theorem \ref{Hchar-odd}.
So, by Corollaries \ref{noS} and \ref{HnoS}, $G$ has $m^{n-1-t}$ first signless Laplacian eigenvectors and $2^{n-1-t}$ first signless Laplacian H-eigenvectors.
\end{exm}

\begin{exm}
Let $G^{m,m/2}$ be a generalized power hypergraph, where $G$ is connected and non-bipartite, and $m$ is a multiple of $4$.
By the discussion after Theorem \ref{Hchar-odd}, $G^{m,m/2}$ is a non-odd-bipartite but odd-colorable hypergraph.
So, zero is an N-eigenvalue of signless Laplacian of $G$.

In particular, take $G=C_3^{4,2}$, where $C_3$ is a triangle as a simple graph.
Then by solving the Smith normal form of the incidence matrix of $C_3^{4,2}$,
it has invariant divisors $1,1,2$ over $\Z_4$ and invariant divisors $1,1$ over $\Z_2$.
So, $C_3^{4,2}$ has $32$ first Laplacian eigenvectors and $8$ first signless Laplacian H-eigenvectors.
As zero is an N-eigenvalue of signless Laplacian of $C_3^{4,2}$, $C_3^{4,2}$ has $32$ first signless Laplacian eigenvectors, all being N-eigenvectors.
\end{exm}


\begin{thebibliography}{99}

\bibitem{AF} F. W. Andersion, K. R. Fuller, \emph{Rings and Categories of Modules}, Springer-Verlag, New York, 1992.

\bibitem{Ber} C. Berge, \emph{Hypergraphs: Combinatorics of finite sets}, North-Holland, 1989.

\bibitem{CPZ} K. C. Chang, K. Pearson, T. Zhang, Perron-Frobenius theorem for nonnegative tensors, \emph{Commu. Math. Sci.}, \textbf{6}(2008), 507-520.

\bibitem{CPZ2} K. C. Chang, K. Pearson, T. Zhang, On eigenvalue problems of real symmetric tensors,
\emph{ J. Math. Anal. Appl.}, \textbf{350}(2009), 416-422.

\bibitem{CD} J. Cooper, A. Dutle, Spectra of uniform hypergraphs, \emph{Linear Algebra Appl.}, \textbf{436}(9)(2012), 3268-3292.

\bibitem{CH} R. Cowen, S. H. Hechler, J. W. Kennedy, A. Steinberg, Odd neighborhood transversals on grid graphs, \emph{Discrete Math.}, \textbf{307}(2007), 2200-2208.


\bibitem{CQ} H. Chen, L. Qi, Some spectral properties of odd-bipartite $Z$-tensors and their absolute tensors,
 \emph{Front. Math. China}, \textbf{11}(3)2016, 539-556.

\bibitem{FKT} Y.-Z. Fan, M. Khan, Y.-Y. Tan, The largest H-eigenvalue and spectral radius of Laplaican tensor of non-odd-bipartite generalized power hypergraphs,
\emph{Linear Algebra Appl.}, \textbf{504}(2016), 487-502.

\bibitem{FHB} Y.-Z. Fan, T. Huang, Y.-H. Bao, C.-L. Zhuan-Sun, Y.-P. Li,
The spectral symmetry of weakly irreducible nonnegative tensors and connected hypergraphs, \emph{Trans. Amer. Math. Soc.}, DOI: https://doi.org/10.1090/tran/7741.

\bibitem{FBH} Y.-Z. Fan, Y.-H. Bao, T. Huang,
Eigenvariety of nonnegative symmetric weakly irreducible tensors associated with spectral radius  and its application to hypergraphs, \emph{Linear Algebra Appl.}, \textbf{564} (2019), 72-94.

\bibitem{FGH} S. Friedland, S. Gaubert, L. Han, Perron-Frobenius theorem for nonnegative multilinear forms and extensions,
 \emph{Linear Algebra Appl.}, \textbf{438}(2013), 738-749.

\bibitem{Ha} R. Hartshorne, \textit{Algebraic Geometry}, Springer-Verlag, New York, 1977.

\bibitem{HQ} S. Hu, L. Qi, The eigenvectors associated with the zero eigenvalues of the Laplacian and signless Laplacian tensors of a uniform hypergraph,
\emph{Discrete Appl. Math.}, \textbf{169}(2014), 140-151.

\bibitem{HQS} S. Hu, L. Qi, J. Y. Shao, Cored hypergraphs, power hypergraphs and their Laplacian H-eigenvalues, \emph{ Linear Algebra Appl.}, \textbf{439}(2013), 2980-2998.

\bibitem{HQX} S. Hu, L. Qi, J. Xie, The largest Laplacian and signless Laplacian H-eigenvalues of a uniform hypergraph,  \emph{Linear Algebra Appl.}, \textbf{469}(2015), 1-27.

\bibitem{HuYe} S. Hu, K. Ye, Mulplicities of tensor eigenvalues, \emph{Commu. Math. Sci.}, \textbf{14}(2016), 1049-1071.

\bibitem{KLQY} L. Kang, L. Liu, L. Qi, X. Yuan, Spectral radii of two kinds of uniform hypergraphs, \emph{Appl. Math. Comput.}, \textbf{338} (2018) 661-668.

\bibitem{KF} M. Khan, Y.-Z. Fan, On the spectral radius of a class of non-odd-bipartite even uniform hypergraphs, \emph{Linear Algebra Appl.},  \textbf{480}(2015), 93-106.

\bibitem{KF2} M. Khan, Y.-Z. Fan, Y.-Y. Tan, The H-spectra of a class of generalized power hypergraphs, \emph{Discrete Math.}, \textbf{339}(2016), 1682-1689.


\bibitem{Lim} L.-H. Lim, Singular values and eigenvalues of tensors: a variational approach,
\emph{Proceedings of the 1st IEEE International Workshop on Computational Advances in Multi-Sensor Adaptive Processing}, 2005, pp. 129-132.

\bibitem{LM} L. Lu, S. Man, Connected hypergraphs with small spectral radius, \emph{Linear Algebra Appl.},  \textbf{509}(2016), 206-227.




\bibitem{Ni} V. Nikiforov, Hypergraphs and hypermatrices with symmetric spectrum,  \emph{Linear Algebra Appl.}, \textbf{519}(2017), 1-18.

\bibitem{PZ} K. Pearson, T. Zhang, On spectral hypergraph theory of the adjacency tensor, \emph{Graphs Combin.}, \textbf{30}(5) (2014), 1233-1248.

\bibitem{RS} D. Rautenbach, Z. Szigeti, Greedy colorings of words, \emph{Discrete Appl. Math.}, \textbf{160}(2012), 1872-1874.



\bibitem{Qi} L. Qi, Eigenvalues of a real supersymmetric tensor, \emph{J. Symbolic Comput.}, \textbf{40}(6)(2005), 1302-1324.

\bibitem{Qi2} L. Qi, Eigenvalues and invariants of tensors, \emph{J. Math. Anal. Appl.}, \textbf{325}(2007), 1363-1377.

\bibitem{Qi3} L. Qi, H$^+$-eigenvalues of Laplacian and signless Laplacian tensor, \emph{Commu. Math. Sci.}, \textbf{12}(2014), 1045-1064.


\bibitem{Shao} J. Y. Shao, A general product of tensors with applications, \emph{ Linear Algebra Appl.}, \textbf{439}(2013), 2350-2366.

\bibitem{SSW} J.-Y. Shao, H.-Y. Shan and B.-F. Wu, Some spectral properties and characterizations of
connected odd-bipartite uniform hypergraphs, \emph{Linear Multilinear Algebra}, \textbf{63}(2015), 2359-2372.

\bibitem{YY1} Y. Yang and Q. Yang,
Further results for Perron-Frobenius theorem for nonnegative tensors, \emph{SIAM J Matrix Anal. Appl.}, \textbf{31}(5)(2010), 2517-2530.

\bibitem{YY2} Y. Yang and Q. Yang,
Further results for Perron-Frobenius theorem for nonnegative tensors II, \emph{SIAM J Matrix Anal. Appl.}, \textbf{32}(4)(2011), 1236-1250.

\bibitem{YY3} Y. Yang, Q. Yang, On some properties of nonnegative weakly irreducible tensors, Available at arXiv: 1111.0713v2.

\bibitem{ZSWB} J. Zhou, L. Sun, W. Wang, C. Bu, Some spectral properties of uniform hypergraphs, \emph{Electron. J. Combin.}, \textbf{21}(4)(2014), \#P4.24.


\end{thebibliography}
\end{document}